\newcommand{\redd}{\hfill $\lozenge$}
\let\oldnl\nl% Store \nl in \oldnl
\newcommand{\nonl}{\renewcommand{\nl}{\let\nl\oldnl}}% Remove line number for one line
\newcommand{\change}[1]{{  #1}}
\newtheorem{theorem}{Theorem}
\newtheorem{lemma}{Lemma}
\newtheorem{remark}{Remark}
\newtheorem{assumption}{Assumption}
\DeclareMathOperator*{\minimize}{minimize}
\DeclareMathOperator*{\st}{subject\ to}
\title{\LARGE \bf
A Structured Optimal Controller with Feed-Forward for Transportation
}
\author{Martin Heyden, Richard Pates and Anders Rantzer% <-this % stops a space
\footnote{This work was supported by the Swedish Foundation for Strategic Research through the project SSF {RIT15-0091} SoPhy.\newline
\indent The authors are members of the LCCC Linnaeus Center and the ELLIIT Excellence Center at Lund University.\newline
\indent The authors are with the Department of Automatic Control, Lund University, Box 118, {SE-221 00 Lund}, Sweden. \newline
\indent © 2021 IEEE.  Personal use of this material is permitted.  Permission from IEEE must be obtained for all other uses, in any current or future media, including reprinting/republishing this material for advertising or promotional purposes, creating new collective works, for resale or redistribution to servers or lists, or reuse of any copyrighted component of this work in other works.
}%
}
\crefname{assumption}{Assumption}{Assumptions}
\begin{document}

\maketitle

%%%%%%%%%%%%%%%%%%%%%%%%%%%%%%%%%%%%%%%%%%%%%%%%%%%%%%%%%%%%%%%%%%%%%%%%%%%%%%%%
\begin{abstract}
We study an optimal control problem for a simple transportation model on a \change{path} graph. We give a closed form solution for the optimal controller, which can also account for planned disturbances using feed-forward. The optimal controller is highly structured, which allows the controller to be implemented using only local communication, conducted through two sweeps through the graph.
\end{abstract}

%%%%%%%%%%%%%%%%%%%%%%%%%%%%%%%%%%%%%%%%%%%%%%%%%%%%%%%%%%%%%%%%%%%%%%%%%%%%%%%%
\section{Introduction}
In this paper we study a simple Linear Quadratic control transportation problem on a network. Such problems have well known solutions based on the Riccati equation \cite{kalman1960}. This gives a static feedback law 
$$
u = Kx,
$$
where $u$ is the input to the system, $x$ is the state of the system and $K$ is a matrix with real entries. This matrix is in general dense. This is undesirable in large-scale problems, since it implies that measurements from the entire network are required to compute the optimal inputs at every node. Furthermore a centralized coordinator with knowledge of the entire system is required to determine the matrix $K$, and a complete redesign will be required in response to any changes in the network.

These factors have led to the development of a range of general purpose methods for structured control system design. Some notable themes include the notion of Quadratic Invariance \cite{RL06,LL11}, System Level Synthesis \cite{WMD18}, and the use of large-scale optimization techniques (e.g. \cite{LFJ13}). A downside with these approaches is that they improve scalability at the expense of performance. That is they search over families of controllers that exclude the dense optimal controller for \eqref{eq:problem}. While in comparison with the alternative this may be an acceptable trade-off, it implicitly assumes that just because the feedback law is dense, it cannot be efficiently implemented.

The main result of this paper is to show that the simple structure in our problem allows the optimal control law  to be computed and implemented in a simple and scalable manner. The resulting control actions are the same as those from a Riccati approach, and could in principle be calculated that way. However, there are extra structural features in the control law that are obscured by the resulting dense feedback matrix representation, and it is not obvious how to exploit these to give a scalable implementation from the gain matrix obtained from the Riccati equation. 

\subsection{Problem Formulation}
We consider the problem of transportation and production of goods on a directed path graph
 with vertices $v_1,v_2,\ldots{},v_N$ and directed edges $(e_N,e_{N-1}),\ldots{},(e_2,e_1)$.
The dynamics are given by
\begin{equation}\label{eq:dynamics}
  z_i[t+1]  = z_i[t] - u_{i-1}[t] + u_{i}[t-\tau_{i}] + v_i[t] + d_i[t].
\end{equation}
All the variables are considered to be defined relative to some equilibrium. In the above $z_i[t] \in \mathds{R}$ is the quantity in node $i$ at time $t$. The system can be controlled using the variables $u_i[t]\in \mathds{R}$ and $v_i[t]\in \mathds{R}$. The variable $u_i[t]$ denotes the amount of the quantity that is transported from node $i+1$ to node $i$ (again relative to some equilibrium flows), and the transportation takes $\tau_{i}$ time units. For the last node $N$ it is assumed that $u_N[t] = 0$ for all $t$. The variable $v_i[t]$ denotes the flexible production or consumption of the quantity at the \emph{i}th node. Finally $d_i[t]\in \mathds{R}$ is the fixed production/consumption at the \emph{i}th node. This will be treated like a forecast, or \emph{planned disturbance}, that is known to the designer, but cannot be changed. This model could for instance describe a water irrigation network \cite{cantoni2007control} or a simple supply chain system \cite{inventory}. A state-space representation for \eqref{eq:dynamics} can be obtained by setting $z_i[t]$, $u_i[t-\delta],\ 1\leq\delta\leq\tau_i$ to equal the system state.

The goal is to optimally operate this network around some equilibrium point. The performance is measured by the cost of deviating from the equilibrium levels $q_iz_i^2$ and the cost of  the variable production $r_iv_i^2$,  where $q$ and $r$ are strictly positive constants.
We thus consider the following linear quadratic control problem on a graph with $N$ nodes,
\begin{equation}\label{eq:problem}\begin{aligned}
  \minimize_{z,u,v} \quad & \sum_{t=0}^\infty\sum_{i=1}^N \big(q_iz_i[t]^2 + r_iv_i[t]^2 \big)\\
  \st \quad & \text{dynamics in \eqref{eq:dynamics}}\\
  & z[0], d_i[t].
\end{aligned}\end{equation}
Note that there is no penalty on the internal flows $u_i$. This can for example be motivated by the transportation costs already being covered by the costs of the nominal flows (or in the case of water irrigation networks that gravity does the moving). This problem is in effect a dynamic extension of the types of scheduling problems considered in transportation networks \cite{ahuja1995applications}, and could be used to compliment such approaches by optimally adjusting a nominal schedule in real time using the feedback principle.

A similar problem has been studied in a previous paper \cite{heyden2018structured}. However we give several important extensions, in that we allow for non-homogeneous delays, production in every node and optimal feed-forward for planned disturbances. Allowing for non homogeneous delays is important as that will be the case for almost all applications. Taking planned disturbances into account allows for increased performance whenever such disturbances can be forecast.  Finally, allowing for some variation in the consumption $v_i$ for each node will also generally increase performance whenever such variation is possible. The effect the feed-forward of planned disturbances can have on the controller performance is illustrated in \cref{fig:example}, where we see that the controller with feed-forward anticipates the action of the disturbances, allowing the effect to be better spread through the graph and the node levels to be more tightly regulated. This results in a significant improvement in performance.

\begin{figure}[]
  \centering
  %left bottom right top
  \includegraphics{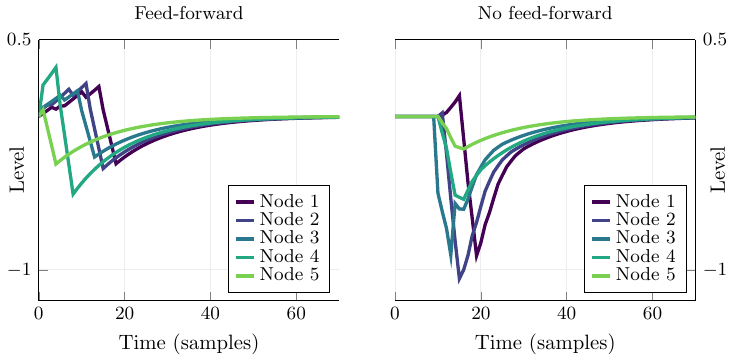}
  \caption{Example of the effect of feed forward. The graph has five nodes and transportation delay $\tau_1 = 3$, $\tau_2 = 2$, $\tau_3 = 5$ and $\tau_4 =4$. There is a disturbance in node three from time 10 to 13 and in node 2 from time 12 to 15. We can see that the feed-forward manages to handle the disturbances better by spreading out their effect throughout the graph.
  To quantify the difference one can consider the cost in \eqref{eq:problem}, which is $3.11$ with feed-forward and 11.35 without feed-forward.}
  \label{fig:example}
\end{figure}

%\subsection{Result Preview}

\subsection{Result Preview}
The key structural feature that we identify in the optimal control law for \eqref{eq:problem} is that optimal inputs can be computed recursively by two sweeps through the graph (even though the control law that would be obtained from the Riccati equation would be dense). More specifically, two intermediate variables local to the \emph{i}th node $\delta_i[t]$ and $\mu_i[t]$ can be computed recursively through relationships on the form
\[
\begin{aligned}
  \delta_i[t]&=f(\text{local\_variables},\delta_{i-1}),\\
  \mu_i[t]&=g(\text{local\_variables},\mu_{i+1}),
\end{aligned}
\]
from which the optimal inputs $u_i[t]$ and $v_i[t]$ can be calculated based only on local variables. Conceptually this step is rather similar to solving a sparse system of equations with the structure of a directed path graph using back substitution.

The details are given in \cref{alg:com_gen}, and this process is illustrated in \cref{fig:com_ill}. This allows the optimal inputs to be computed by sweeping once through the graph from the first node to the final node to compute the $\delta$'s, and once from the final node to the first to compute the $\mu$'s. Both sweeps can be conducted in parallel. This represents a sort of middle ground between centralised control and decentralised control, in which global optimality is preserved whilst only requiring distributed communication. The price for this is that the sweep through the whole graph must be completed before the inputs can be applied, but for systems with reasonably long sample times (which is likely true in transportation or irrigation systems) this seems a modest price to pay. Interestingly the controller parameters can be computed in a similar distributed manner, allowing the controller to also be synthesised in a simple and scalable manner. This is shown in \cref{alg:init}.

\section{Results}

In this section we present two algorithms that together allow for the solution of \eqref{eq:problem}. The first of these algorithms computes the parameters of a highly structured control law for solving \eqref{eq:problem}, whereas the second shows that the control law has a simple distributed implementation. These features will be discussed in \cref{sec:impl}. In this section we will demonstrate that under suitable assumptions on the planned disturbances $d_i[t]$, \cref{alg:init,alg:com_gen}  give the optimal solution to \eqref{eq:problem} . This constitutes the main theoretical contribution of the paper.

In the absence of the planned disturbances (i.e. with $d_i[t]\equiv{}0$), \eqref{eq:problem} is an infinite horizon LQ problem in standard form. It is of course highly desirable in applications to be able to include information about upcoming disturbances in the synthesis of the control law. However if we are given an infinite horizon of disturbances, \eqref{eq:problem} is no longer tractable. For the theoretical perspective, it turns out that the suitable assumption on the horizon length is as follows:
\begin{assumption}\label{assump:horizon}
Let the aggregate delay $\sigma_k$ in \eqref{eq:dynamics} be
\begin{equation*}
  \sigma_k = \sum_{i=1}^{k-1} \tau_i.
\end{equation*}
Given a horizon length $H\geq0$, assume that
$d_i[t] = 0$ for all $t> H+(\sigma_N-\sigma_i)$ and for all $1\leq i \leq 
N$. 
\end{assumption}

Observe that if $d_i[t] = 0$ for all $t>H$ then \cref{assump:horizon} holds. Thus the assumption captures the natural notion of having a finite horizon $H$ of information about the disturbances $d_i[t]$ available when constructing the control input. 
In \cref{sec:simulations} we will investigate how the length of the horizon affects the performance of the controller.
%%% ONLY TRUE FOR EXTENDED VERSION %%%

We will now state the main results of this paper. The following theorem shows that \eqref{eq:problem} can be solved by running two simple algorithms; one for calculating all the necessary parameters, and one for computing the optimal inputs. Both algorithms can be implemented using only local communication as discussed in \cref{sec:impl}. A graphical illustration of the implementation of \cref{alg:com_gen}, which is the algorithm used for the on-line implementation, can be found in \cref{fig:com_ill}. 

\begin{theorem}\label{thm:gen}
Let
\[
D_i[t] = \sum_{j=1}^i d_j[t-\sigma_j].
\]
 Assume that $H$ and $d_j[t]$ satisfy \cref{assump:horizon}.  Then the optimal inputs $u_i[t]$ and $v_i[t]$ for the problem in \eqref{eq:problem} are given by running \cref{alg:com_gen} with the parameters calculated by \cref{alg:init}.
\end{theorem}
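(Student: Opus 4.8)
\emph{Proof plan.} The strategy is to peel off the planned disturbances and the infinite horizon, and then to recognise the optimality (KKT) conditions of what remains as a sparse linear system whose sparsity pattern is the path $v_1,\dots,v_N$, so that it is solved by one forward and one backward substitution --- precisely the two sweeps of Algorithm~\ref{alg:com_gen}. First I would lift \eqref{eq:dynamics} to a delay-free state-space model by adjoining the in-transit quantities $u_i[t-\delta]$, $1\le\delta\le\tau_i$, to the state. Because a unit injected at node $j$ only influences the dynamics downstream of it after the aggregate delay $\sigma_j$, Assumption~\ref{assump:horizon} guarantees that no disturbance information survives past time $H+\sigma_N$; beyond that instant \eqref{eq:problem} is a time-invariant infinite-horizon LQ problem, which (using $q_i,r_i>0$ together with stabilisability and detectability of the lifted model) has a unique stabilising solution. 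Cutting the cost at $t=H+\sigma_N$ therefore turns \eqref{eq:problem} into a finite-horizon LQ problem with a quadratic terminal cost equal to this stationary cost-to-go, and that terminal cost is exactly the object computed by Algorithm~\ref{alg:init}.

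For the offline part I would show that the optimal cost-to-go, viewed as a function of the lifted state, retains a separable structure along the graph: since node $i$ and node $i+1$ are coupled only through the single scalar flow $u_i$, a backward-induction / completion-of-squares argument produces a cost-to-go whose Hessian is block tridiagonal and whose parameters satisfy a one-pass recursion in the node index $i$. Verifying that this recursion is the one in Algorithm~\ref{alg:init} --- including the scalar gains attached to the intermediate variables and the precise way the heterogeneous delays $\tau_i$ and aggregate delays $\sigma_i$ enter --- is then a mechanical matching computation. Convexity of \eqref{eq:problem} with $q_i,r_i>0$ makes any stationary point the global optimum, so no separate sufficiency argument is needed beyond checking that the recursion in Algorithm~\ref{alg:init} is well posed (the relevant scalars remain positive and converge to the stationary values).

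With the value function in hand, the online step is to write down the stationarity conditions of the finite-horizon problem via Lagrange multipliers $\lambda_{i}[t]$ on the dynamics: differentiating in $v_i[t]$ gives $v_i[t]$ proportional to the co-state of node $i$; differentiating in $u_i[t]$, which carries no running cost, equates the co-states of nodes $i$ and $i+1$ up to the delay shift $\tau_i$; and differentiating in $z_i[t]$ gives the adjoint recursion in time. For each fixed $t$ the node-$i$ equations involve only nodes $i-1,i,i+1$, so this is a block-tridiagonal system, which I would solve by a two-sided elimination: a forward sweep eliminating the upstream coupling produces $\delta_i[t]=f(\text{local},\delta_{i-1})$, a backward sweep eliminating the downstream coupling produces $\mu_i[t]=g(\text{local},\mu_{i+1})$, and combining the two at node $i$ recovers $u_i[t]$ and $v_i[t]$ from local data only. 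Identifying $f$ and $g$ with the updates in Algorithm~\ref{alg:com_gen}, and checking that the feed-forward contribution of the future $d_j$'s aggregates into the sweeps exactly through the quantities $D_i[t]=\sum_{j\le i} d_j[t-\sigma_j]$ appearing in the theorem, completes the argument.

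The main obstacle I anticipate is bookkeeping rather than conceptual: keeping the time indices consistent once the heterogeneous delays $\tau_i$ and the aggregate delays $\sigma_i$ are threaded simultaneously through the adjoint recursion and the feed-forward sum, and proving that both sweeps stay well defined (no division by zero, positivity preserved, and the parameter recursion of Algorithm~\ref{alg:init} actually converges) for every positive choice of $q_i,r_i$ and every delay profile. A secondary delicate point is the interface between the finite-horizon feed-forward part and the infinite-horizon stationary part: one must check that the $\delta$ and $\mu$ recursions, initialised from the stationary parameters, reproduce exactly the tail of the true infinite-horizon optimal trajectory --- which is the step where Assumption~\ref{assump:horizon} is used in full strength.
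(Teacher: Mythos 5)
Your plan hinges on the claim that the optimal cost-to-go of the lifted system has a block-tridiagonal Hessian because adjacent nodes are coupled only through the scalar flow $u_i$, and that the stationarity conditions at each fixed $t$ form a block-tridiagonal system in the node index that a forward/backward elimination solves. Neither holds. Sparsity of the dynamics is not inherited by the Riccati solution: the optimal feedback matrix for \eqref{eq:problem} is dense, and the value function turns out to be a sum of squares of \emph{nested} aggregates, $\sum_k \gamma_k S_k[t]^2$ with $S_k[t]=\sum_{i\le k} z_i[t-\sigma_i]$, whose Hessian in the node coordinates is full (its $(i,j)$ entry is proportional to $\sum_{k\ge\max(i,j)}\gamma_k$). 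Likewise, stationarity in $u_i[t]$ equates the co-states of neighbouring nodes only up to the delay shift, $\lambda_{i+1}[t]=\lambda_i[t+\tau_i]$, i.e.\ $\lambda_i[t]=\lambda_1[t+\sigma_i]$; the optimality system therefore couples node $i+1$ at time $t$ to node $i$ at time $t+\tau_i$, and is not a tridiagonal system in $i$ at fixed $t$. (The two sweeps of Algorithm \ref{alg:com_gen} are two independent, parallelisable prefix/suffix aggregations, not the sequential forward-elimination/back-substitution of a tridiagonal solve.) So the ``mechanical matching'' to Algorithms \ref{alg:init}--\ref{alg:com_gen} would not go through as described.

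The observation you make in passing --- that $u$ carries no running cost, so co-states of adjacent nodes coincide after a time shift --- is in fact the key, but it must be pushed to its conclusion: it implies the internal flows telescope out of the time-shifted sums $S_k[t]$ and $V_k[t]=\sum_{i\le k}v_i[t-\sigma_i]$, that the running cost decomposes as $\sum_i q_i z_i^2=\sum_k\gamma_k S_k^2$ and $\sum_i r_i v_i^2=\sum_k\rho_k V_k^2$ after a static allocation step (this is where the harmonic-sum recursions for $\gamma_k,\rho_k$ in Algorithm \ref{alg:init} come from), and that \eqref{eq:problem} collapses to dynamic programming in the \emph{scalar} variables $S$ and $V$: a scalar Riccati equation beyond time $\sigma_N+H$ (which is where Assumption \ref{assump:horizon} enters) followed by finitely many scalar DP steps whose back-substitution yields the $\delta$ and $\mu$ recursions. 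Your first paragraph (truncation to a finite horizon with the stationary cost-to-go as terminal cost) and your treatment of the feed-forward through $D_i[t]$ are consistent with the paper; the missing ingredient is the shifted-sum reduction, without which the sparsity you assume for the value function is simply false.
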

\begin{proof}
See the appendix. A sketch of the proof can be found in \cref{sec:proof_idea}.
\end{proof}
%We will discuss how the calculation of $D_i$ can be efficiently implemented in the next section. 
\begin{remark}
    In most cases the choice of $H$ can be made without considering its effect on the controller implementation, and can instead be chosen based only on the nodes' ability to forecast their disturbances. In applications it would also be natural to incorporate new information on upcoming disturbances in a receding horizon fashion.
    This will be further discussed in \cref{sec:recedinghorizon}. \redd
\end{remark}

\begin{remark}
There is an asymmetry in \cref{assump:horizon} in that $(\sigma_N-\sigma_i)$ grows as $i$ decreases from $N$ to $1$. This means that this assumption allows nodes further down the graph to have longer horizons of planned disturbances. Of course there is no reason to believe that these nodes are better at predicting their disturbances. It is just that the derived theory can handle those disturbances in a straightforward manner since the optimal controller lumps the disturbances into time shifted sums, with a time shift proportional to $\sigma_i$. \redd 
\end{remark}

\begin{algorithm}[]
\caption{Computation of control parameters.}
\label{alg:init}
\DontPrintSemicolon
\SetNoFillComment
\SetAlgoLined
\KwIn{$q_i$, $r_i$, $\tau_i$, $H$}
\KwOut{$\gamma_i$, $g_i(j)$, $P_i(\tau_i,1)$, $h_i$, $\phi_i(\Delta)$, $a_i$, $c_i$}
\vspace{-7pt}
\nonl \hrulefill  \\
\vspace{-5pt}
\tcc{First Sweep, upstream direction}
$\gamma_1 = q_1, \quad \rho_1 = r_1$ \tcp*[r]{initialize first node}
\textbf{send} $\gamma_1$ and $\rho_1$ to upstream neighbor\\
\For {node i = 2:N} { 
  $\gamma_i = \frac{\gamma_{i-1}q_i}{\gamma_{i-1} + q_i}, \quad \rho_i = \frac{\rho_{i-1}r_i}{\rho_{i-1} + r_i}$\\
  \textbf{send} $\gamma_i$ and $\rho_i$ to upstream neighbor
}
\vspace{-5pt}
\nonl \dotfill\\
\vspace{-3pt}
\tcc{Second Sweep  Downstream direction}
$X_N(H+2) =  -\frac{\gamma_N}{2}+\sqrt{\gamma_N\rho_N+\frac{\gamma_N^2}{4}}$ \\
\For {node i = N:1} {
  $X_i(\tau_i)  = \frac{\rho_i(X_{i+1}(1)+\gamma_{i})}{X_{i+1}(1)+\gamma_{i} + \rho_i}$ \tcp*{Not for node N} 
  $X_i(t-1)  = \frac{\rho_i(X_i(t)+\gamma_i)}{X_i(t) + \gamma_i+\rho_i}$,  \tcp*{$1\leq t-1\leq\tau_i-1$ or for i =N, $1\leq t-1 \leq H+1$}
  $g_i(i) = \frac{X_i(i)}{X_i(i)+\gamma_i},\quad 2\leq i \leq \tau_i$ \\
  $g_{i+1}(1) = \frac{X_{i+1}(1)}{X_{i+1}(1)+\gamma_{i}}$\\
  $b_i = g_{i+1}(1)\prod_{j=2}^{\tau_i}g_i(j)$ \\
  \textbf{send} $X_i(\tau_i)$, $b_i$ to downstream neighbor.\\
  \tcp{for $1\leq l,m \leq \tau_i$}
  $P_i(1,m)  = \frac{X_i(1)}{\rho_{i}}$ \\
  $P_i(l,m)  = (1-\frac{X_i(l)}{\rho_{i}})g_i(l)P_i(l-1,m) + \frac{X_i(l)}{\rho(i)}, \quad l\leq m$ \\
  $P_i(l,m)  = (1-\frac{X_i(l)}{\rho_{i}})P_i(l-1,m) + \frac{X_i(l)}{\rho(i)}, \quad l> m$ \\
}
\vspace{-5pt}
\nonl \dotfill \\
\vspace{-3pt}
\tcc{Third Sweep, upstream direction}
$h_1 = P_1(\tau_1,\tau_1)g_2(1)$\\
\textbf{send} $h_1$ to upstream neighbor.\\
\For {node i= 2:N-1} {
  $h_i = (1-P_i(\tau_i,1))b_ih_{i-1} + P_i(\tau_i,\tau_i)g_{i+1}(1)$ \\
  \textbf{send} $h_i$ to upstream neighbor.
}
\vspace{-5pt}
\nonl\dotfill \\
\vspace{-3pt}
\tcc{Some final local Calculations}
\tcp{For $1\leq \Delta \leq \tau_i$, Empty Product, $\prod_{j=2}^1$ = 1}
$\phi_i(\Delta) = \left(1-P_i(\tau_i,\Delta) -(1-P_i(\tau_i,1))h_{i-1}{\prod_{j=2}^{\Delta +1}}g_k(j) \right)$\\
$a_i  = \frac{X_i(1)}{r_i}+\frac{\gamma_i}{q_i}(1-\frac{X_i(1)}{\rho_i})$\\
$c_i  = -\Big(\frac{X_i(1)}{r_i} - \frac{\gamma_iX_i(1)}{q_i\rho_i} \Big)(1-h_{i-1}) + \frac{\gamma_i}{q_i}h_{i-1}$
\end{algorithm}
\SetInd{0.5em}{0.5em}
%\vspace{0.5cm}

\begin{algorithm}[h!]
\caption{Distributed Controller Implementation.}
\label{alg:com_gen}
\SetNoFillComment
%\SetKwInOut{Input}{input}\SetKwInOut{Output}{output}
\KwIn{$z_i[t]$, $u_i[t-(\tau_i-\Delta)]$, $d_i[t]$, $D_i[t+\sigma_i +\Delta]$}
\KwOut{$u_i[t]$,$v_i[t]$}
\vspace{-0.25cm}
\nonl \hrulefill  \\
\vspace{-0.15cm}
\tcp{Let $\tau_N = H + 1$.}
\tcc{Upstream sweep - Done in parallel with downstream sweep}
\For{node i = 1:N} {
$\Phi_i[t] = \phi_i(1)z_i[t]  +$\\
\nonl$ \ \ \ \ \sum_{\Delta=0}^{\tau_i-1} \phi_i(\Delta+1)\Big(u_i[t-(\tau_i-\Delta)]
 + D_i[t+\sigma_i+\Delta]\Big)$\\
$\delta_i[t]  = \Phi_i[t] + (1-P_{i}(\tau_{i},1))\delta_{i-1}[t]$\\
\textbf{send} $\delta_i[t]$ upstream }
\vspace{-0.25cm}
\nonl \dotfill\\
\vspace{-0.15cm}
\tcc{Downstream sweep - Done in parallel with upstream sweep}
\For{node i = N:1} {
\tcp{Empty Product, $\prod_{j=2}^1$ = 1}
$\pi_i[t] = z_i[t] + \sum_{\Delta=0}^{\tau_i-1} \Big(u_i[t-(\tau_i-\Delta)] + D_i[t+\sigma_i+\Delta]\Big)\prod_{j=2}^{\Delta+1}g_i(j)$\\
$\mu_i[t] = \pi_i[t] + b_i\mu_{i+1}[t]$\\
\textbf{send} $\mu_i[t]$ downstream}
\vspace{-0.25cm}
\nonl \dotfill\\
\vspace{-0.15cm}
\tcc{Calculate outputs}
$u_{i-1}[t] = (1-\frac{\gamma_i}{q_i})\big(z_i[t]+u_i[t-\tau_i]+D_{i}[t+\sigma_i]\big)$ \\
\nonl  $\qquad \qquad \qquad \qquad \qquad -a_i\delta_{i-1}[t] + c_i\mu_i[t] + d_i[t] -D_{i}[t+\sigma_i]$\\
$v_i[t] = -\frac{X_i(1)}{r_i}\Big(\delta_{i-1}[t]  + (1-h_{i-1})\mu_i[t]\Big)$
\end{algorithm}
%------------------------------------------------------%

\section{Implementation}\label{sec:impl}

\begin{figure*}
\centering
\begin{tikzpicture}[scale=0.95,->,shorten >=1pt,auto,node distance=1.0cm,
                    thick,node/.style={circle,draw,radius = 1cm}]
\tikzstyle{transp} = [circle,dashed,draw,radius =1cm];   
        \def\nodey{5};
        \def\texty{5.9};
        \def\deltax{-3.6};
        \def\phix{-1.8};
        \def\pix{1.8};
        \def\mux{3.6};
        \def\dy{0.9}

\tikzstyle{transp2} = [circle,draw,inner sep=1.5pt,fill]
\tikzstyle{agg} = [rectangle,draw]
\tikzstyle{point} = [circle,draw,fill=black];
\node[node] at (0,0) (n1)  {$z_1$} ;
\node[agg] at (\pix,0) (pi1) {$\pi_1$};
\node[agg] at (\mux,0) (mu1) {$\mu_1$};
\node[agg] at (\phix,0) (phi1) {$\Phi_1$};
\node[agg] at (\deltax,0) (delta1) {$\delta_1$}; 
\draw[ultra thick, color = blue] (n1) -- (pi1);
\draw[ultra thick, color = blue] (pi1) -- (mu1);
\draw[ultra thick, color = red] (n1) -- (phi1);
\draw[ultra thick, color = red] (phi1) to (delta1);

\node[transp] at (0,\dy) (p1-1) {0};
\node[transp] at (0,2*\dy) (p1-2) {1};
\node[node] at (0,3*\dy) (n2) {$z_2$};
%\draw[line width=1mm] (n2) to (p1-2);
%\draw[line width=1mm] (p1-2)--(p1-1);
%\draw[line width=1mm] (p1-1)--(n1);
\draw[ultra thick, color = blue] (p1-1) to (pi1);
\draw[ultra thick, color = blue] (p1-2) to (pi1);
\draw[ultra thick, color = red] (p1-1) to (phi1);
\draw[ultra thick, color = red] (p1-2) to (phi1);

\node[agg] at (\pix,3*\dy) (pi2) {$\pi_2$};
\node[agg] at (\mux,3*\dy) (mu2) {$\mu_2$};
\node[agg] at (\phix,3*\dy) (phi2) {$\Phi_2$};
\node[agg] at (\deltax,3*\dy) (delta2) {$\delta_2$}; 
\draw[ultra thick, color = blue] (mu2) to (mu1);
\draw[ultra thick, color = red] (delta1) to (delta2);
\draw[ultra thick, color = blue] (n2) to (pi2);
\draw[ultra thick, color = red] (n2) to (phi2);
\draw[ultra thick, color = blue] (pi2) to (mu2);
\draw[ultra thick, color = red] (phi2) to (delta2);

\node[transp] at (0,4*\dy) (p2-1) {0};
\node[node] at (0,5*\dy) (n3) {$z_3$};
%\draw[line width=1mm] (p2-1)--(n2);
%\draw[line width=1mm] (n3)--(p2-1);
\draw[ultra thick, color = blue] (p2-1) to (pi2);
\draw[ultra thick, color = red] (p2-1) to (phi2);

\node[agg] at (\pix,5*\dy) (pi3) {$\pi_3$};
\node[agg] at (\mux,5*\dy) (mu3) {$\mu_3$};
\node[agg] at (\phix,5*\dy) (phi3) {$\Phi_3$};
\node[agg] at (\deltax,5*\dy) (delta3) {$\delta_3$}; 
\draw[ultra thick, color = blue] (mu3) to (mu2);
\draw[ultra thick, color = red] (delta2) to (delta3);
\draw[ultra thick, color = blue] (n3) to (pi3);
\draw[ultra thick, color = red] (n3) to (phi3);
\draw[ultra thick, color = blue] (pi3) to (mu3);
\draw[ultra thick, color = red] (phi3) to (delta3);
\node at (0,6*\dy) (inv1) {};
\node at (-2,6*\dy) (inv1d) {};
\node at (2,6*\dy) (inv1u) {};
%\draw[line width = 1mm] (inv1) to (n3);

\node[transp] at (0,6*\dy) (p3-1) {  0  };
\draw[ultra thick, color = red] (p3-1) to (phi3);
\draw[ultra thick, color = blue] (p3-1) to (pi3);
%\node[node] at (-9.5)

\node[node] at (0,7*\dy) (zN-1) {$z_{4}$};
\node[agg] at (\pix,7*\dy) (piN-1) {$\pi_{4}$};
\node[agg] at (\mux,7*\dy) (muN-1) {$\mu_{4}$};
\node[agg] at (\phix,7*\dy) (phiN-1) {$\Phi_{4}$};
\node[agg] at (\deltax,7*\dy) (deltaN-1) {$\delta_{4}$}; 
\draw[ultra thick, color = red] (delta3) to (deltaN-1);
\draw[ultra thick, color = blue] (muN-1) to (mu3);
\draw[ultra thick, color = blue] (zN-1) to (piN-1);
\draw[ultra thick, color = red] (zN-1) to (phiN-1);
\draw[ultra thick, color = blue] (piN-1) to (muN-1);
\draw[ultra thick, color = red] (phiN-1) to (deltaN-1);
%\draw[line width = 1mm] (zN-1) to (p3-1);

\node[node] at (0,10*\dy) (zN) {$z_{5}$};
\node[agg] at (\pix,10*\dy) (piN) {$\pi_{5}$};
\node[agg] at (\mux,10*\dy) (muN) {$\mu_{5}$};
\node[agg] at (\phix,10*\dy) (phiN) {$\Phi_{5}$};
\node[agg] at (\deltax,10*\dy) (deltaN) {$\delta_{5}$}; 
\draw[ultra thick, color = red] (deltaN-1) to (deltaN);
\draw[ultra thick, color = blue] (muN) to (muN-1);
\draw[ultra thick, color = blue] (zN) to (piN);
\draw[ultra thick, color = red] (zN) to (phiN);
\draw[ultra thick, color = blue] (piN) to (muN);
\draw[ultra thick, color = red] (phiN) to (deltaN);
\node[transp] at (0,8*\dy) (pN-1) {0};
\node[transp] at (0,9*\dy) (pN-2) {1};
%\draw[line width=1mm] (zN) to (pN-2);
%\draw[line width=1mm] (pN-2)--(pN-1);
%\draw[line width=1mm] (pN-1)--(zN-1);
\draw[ultra thick, color = blue] (pN-1) to (piN-1);
\draw[ultra thick, color = blue] (pN-2) to (piN-1);
\draw[ultra thick, color = red] (pN-1) to (phiN-1);
\draw[ultra thick, color = red] (pN-2) to (phiN-1);

\node[node] at (\nodey,10*\dy) (gn5) {\small $z_5[t]$};
\node[transp2] at (\nodey,9*\dy) (gt42) {};
\node[] at (\texty,9*\dy) {$u_4[t-1]$};
\node[transp2] at (\nodey,8*\dy) (gt41) {}; %WTF
\node[node] at (\nodey,7*\dy) (gn4) {\small $z_4[t]$};
\node[] at (\texty,8*\dy) {$u_4[t-2]$};
\node[transp2] at (\nodey,6*\dy) (gt31) {};
\node[] at (\texty,6*\dy) {$u_3[t-1]$};
\node[node] at (\nodey,5*\dy) (gn3) {\small $z_3[t]$};
\node[transp2] at (\nodey,4*\dy) (gt21) {};
\node[] at (\texty,4*\dy) {$u_2[t-1]$};
\node[node] at (\nodey,3*\dy) (gn2) {\small $z_2[t]$};
\node[transp2] at (\nodey,2*\dy) (gt12) {};
\node[] at (\texty,2*\dy) {$u_1[t-1]$};
\node[transp2] at (\nodey,1*\dy) (gt11) {};
\node[] at (\texty,\dy) {$u_1[t-2]$};
\node[node] at (\nodey,0) (gn1) {\small $z_1[t]$};
\draw[ultra thick] (gn5) -> (gt42);
\draw[ultra thick] (gt42) -> (gt41);
\draw[ultra thick] (gt41) -- (gn4);
\draw[ultra thick] (gn4) -- (gt31);
\draw[ultra thick] (gt31) -- (gn3);
\draw[ultra thick] (gn3) -- (gt21);
\draw[ultra thick] (gt21) -- (gn2);
\draw[ultra thick] (gn2) -- (gt12);
\draw[ultra thick] (gt12) -- (gt11);
\draw[ultra thick] (gt11) -- (gn1);

\end{tikzpicture}
\caption{
Illustration of the structured approach for calculating the optimal inputs using \cref{alg:com_gen}, for a 5 node example with $\tau_1 = 2$, $\tau_2 = 1$, $\tau_3 = 1$, $\tau_4 = 2$. The graph at the right of the figure illustrates the underlying dynamics of the network as in \eqref{eq:dynamics}. The left part of the figure illustrates the structure of the computations required to compute the optimal control according to \cref{alg:com_gen}. The solid circles corresponds to node states and dashed circles the quantities in transit. The number in each dashed circle denotes the value of $\Delta$, which then maps to $u_k[t-(\tau_k-\Delta)]$. The rectangles indicates the different intermediate calculations needed to determine the variables required to compute the optimal inputs (lines 2--3 and 7--8 in \cref{alg:com_gen}). These are horizontally aligned with the location in the network where they could be locally performed. The  arrows indicate information flow. Each intermediate can be calculated using only the quantities from the incoming arrows. An upstream sweep is performed (the red arrows) in order to calculate the variables $\delta_i[t]$. The local intermediate $\Phi_i[t]$ variables are calculated (line 2), and then aggregated into the $\delta_i[t]$'s (line 3), which are sequentially passed up the graph. For the downstream sweep the local $\pi_i[t]$ variables are calculated (line 7) and aggregated into the $\mu_i[t]$'s (line 8). Both sweeps can be conducted in parallel, and once they have completed, the optimal inputs for the \emph{i}th node can be determined using the variables at the \emph{i}th location according to lines 11 and 12 in \cref{alg:com_gen}.
}
\label{fig:com_ill}
\end{figure*}

In this section we will discuss the structure in \cref{alg:init,alg:com_gen}, and explain how they can be used to implement an optimal feedback control law for solving \eqref{eq:problem} in a distributed
  manner. In both cases the order in which the computations occur is highly structured. This is illustrated for \cref{alg:com_gen}, which is the algorithm that must be run to compute the control inputs, in \cref{fig:com_ill}.  Matlab code for using these algorithms to calculate the optimal control inputs is available at github\footnote{\texttt{https://github.com/Martin-Heyden/cdc-letters-2021}}, as well as code to verify that \cref{thm:gen} holds numerically. We will also discuss how to calculate $D_i$ and incorporate updates to the planned disturbances in a receding horizon style in \cref{alg:com_D}. 

\subsection{Algorithms 1 and 2, and the Optimal Control Law}

The problem in \eqref{eq:problem} is at its heart an LQ problem, and the optimal controller is given by a static feedback law. The corresponding feedback matrix is generally dense, and that is the case for \eqref{eq:problem} as well. However certain special structural features of the process \eqref{eq:dynamics} are inherited by the optimal control law. It is these features we exploit to give a scalable implementation in \cref{alg:init,alg:com_gen}, which we will now discuss. 

In terms of the algorithm variables, the optimal node production $v_i[t]$ for \eqref{eq:problem} is given by
\begin{equation}\label{eq:opt_v}
  v_i[t] = -\frac{X_i(1)}{r_i}\Big(\delta_{i-1}[t]  + (1-h_{i-1})\mu_i[t]\Big),
\end{equation}
and the optimal 
internal flows $u_i[t]$ are given by
\begin{multline}\label{eq:opt_u}
u_{i-1}[t] = (1-\frac{\gamma_i}{q_i})\big(z_i[t]+u_i[t-\tau_i]+D_{i}[t+\sigma_i]\big) - a_i\delta_{i-1}[t]\\
+ c_i\mu_i[t] + d_i[t] -D_{i}[t+\sigma_i].
\end{multline}

The parameters in these control laws (the symbols without a time index, which includes $X_i(1)$) are calculated in a simple and structured manner by \cref{alg:init}. Of course having an efficient method for computing the control law is less critical than having an efficient real time implementation of the control law (which is performed by \cref{alg:com_gen}), since the control law can be computed ahead of time. However the fact that this step is also highly structured indicates that the approach is scalable, since it allows for the the control law to be simply and efficiently updated in response to changes to the dynamics in \eqref{eq:dynamics} (perhaps resulting from the introduction of more nodes).

\cref{alg:init} computes all the parameters needed to give  a closed form solution for the problem in \eqref{eq:problem}.
The origin of the parameters in \cref{alg:init} are discussed briefly in the proof idea in \cref{sec:proof_idea} and full details are found in the proof in the appendix.
The algorithm consists of three serial sweeps. The first sweep starts at node 1 and calculates  $\gamma_i$ and  $\rho_i$. The second sweep starts at node $N$, and calculates $X_i(t)$ %which can be interpreted as a cost to go for nodes $1,\ldots,i$ with a time shift.
The calculation of $X_i(t)$ has both local steps (line 10) and steps that requires communication (line 9). Also during the second sweep, the parameters $g$, $b$ and $P$ are calculated locally. The third sweep starts at node 1 again, and calculates the parameter $h$, which is needed to calculate the optimal production. Finally, after the third sweep, the parameters $\phi_i(\Delta)$, $a_i$ and $c_i$ are calculated in each node independently.  

The real time implementation of the optimal control law also has a simple distributed implementation. This is the role of \cref{alg:com_gen}, and the structure of the implementation is illustrated in \cref{fig:com_ill}. The algorithm proceeds through two sweeps through the graph. These sweeps are independent of one another, and can be conducted in parallel. In the upstream sweep (from node 1 to node $N$), a set of local variables ($\Phi_i[t]$ and $\delta_i[t]$) are computed according to lines 2--3. This is done sequentially, since the computation of $\delta_i[t]$ depends on $\delta_{i-1}[t]$. $\delta_{i-1}[t]$ then gives all the information node $i$ needs from downstream nodes. Similarly the downstream sweep sequentially computes the $\pi_i[t]$ and $\mu_i[t]$ variables. Here $\mu_i[t]$ gives all the information needed from nodes upstream of node $i$. Once these two sweeps are completed, the optimal inputs can be calculated locally using lines 11 and 12.

\subsection{Receding Horizon and Calculation of $D_i[t]$}\label{sec:recedinghorizon}

We will now discuss how to implement the controller in a receding horizon style to account for updates and new information about the planned disturbances $d_i[t]$. In terms of both the optimal control problem in \eqref{eq:problem} and the controller implementation in \cref{alg:com_gen}, the planned disturbances are treated as fixed quantities, that are known up to some horizon length $H$ into the future (and equal to zero thereafter, c.f. \cref{assump:horizon}). The idea is that $d_i[t]$ determines the anticipated consumption of the quantity at node $i$ and time $t$. Having this information available ahead of time allows the optimal control law to anticipate the predicated usage, and optimally 'schedule' the transportation of the quantity through the network. As we will see in the examples this can lead to a significant improvement in performance. However, in practice we would want to update the values of the $d_i[t]$'s as time passes, and more up to date information becomes available. 

A natural way to do this is to use a receding horizon approach. In this setting we assume that at each point in time, we essentially have a fresh problem, with a new set of planned disturbances. \cref{alg:com_gen} can then be used to compute the first optimal input for this problem, after which the problem resets, and we get a new horizon of planned disturbances. This ensures we always make the best action available to us with a given horizon of information about the disturbances. The question is then, how to efficiently update the part of the control law that depends on the planned disturbances. 

The changes required to accommodate this are rather minor. The planned disturbances do not affect the control parameters or the distributed structure of the implementation of the control law. To see this, observe from \cref{alg:init,alg:com_gen} that all of the information about the planned disturbances is handled through the variables $D_i[t]$ defined in \cref{thm:gen}. 
An illustration of the relationship between individual disturbance $d_i$ and shifted disturbance vectors $D_i[t]$ can be found in \cref{fig:D_illustration}. 
\begin{figure}
    \centering
    \begin{tikzpicture}[->,shorten >=1pt,auto,node distance=1.0cm,
                    thick,node/.style={circle,draw,radius = 1cm}]
        \def\dx{2};
        \def\dy{0.75};
        \def\cx{0.35};
        \def\cy{0.35};
        \def\cxp{0.25}
        \def\cyp{0.25}
        \def\dd{1.25};
        \def\ddist{0.75};
        \def\rotang{-45};
        \tikzstyle{dot} = [circle,draw,inner sep=1.5pt,fill]
        \node[dot] at(0,0)  (n3-0) {};
        \node[dot] at(\dx,0)  (n2-0) {};
        \node[dot] at(2*\dx,0)  (n1-0) {};
        \node[dot] at(0,-\dy)  (n3-1) {};
        \node[dot] at(\dx,-\dy)  (n2-1) {};
        \node[dot] at(2*\dx,-\dy)  (n1-1) {};
        \node[dot] at(0,-2*\dy)  (n3-2) {};
        \node[dot] at(\dx,-2*\dy)  (n2-2) {};
        \node[dot] at(2*\dx,-2*\dy)  (n1-2) {};
        \node[dot] at(0,-3*\dy)  (n3-3) {};
        \node[dot] at(\dx,-3*\dy)  (n2-3) {};
        \node[dot] at(2*\dx,-3*\dy)  (n1-3) {};
        \node[dot] at(0,-4*\dy)  (n3-4) {};
        \node[dot] at(\dx,-4*\dy)  (n2-4) {};
        \node[dot] at(2*\dx,-4*\dy)  (n1-4) {};
        \node[dot] at(0,-5*\dy)  (n3-5) {};
        \node[dot] at(\dx,-5*\dy)  (n2-5) {};
        \node[dot] at(2*\dx,-5*\dy)  (n1-5) {};
        \node[dot] at(0,-6*\dy)  (n3-6) {};
        \node[dot] at(\dx,-6*\dy)  (n2-6) {};
        \node[dot] at(2*\dx,-6*\dy)  (n1-6) {};
        
        \draw[-,dashed] (2*\dx - \cx,-\dy)--(2*\dx,-\dy+\cy)--(2*\dx + \cx,-\dy)--
        (2*\dx,-\dy-\cy) --  (2*\dx - \cx,-\dy);
        \draw[-,dashed] (2*\dx - \cx,-2*\dy)--(2*\dx,-2*\dy+\cy)--(2*\dx + \cx,-2*\dy)--
        (2*\dx,-2*\dy-\cy) --  (2*\dx - \cx,-2*\dy);
        \draw[-,dashed] (\dx - \cx,-\dy)--(\dx,-\dy+\cy)--(2*\dx + \cx,-3*\dy)--
        (2*\dx,-3*\dy-\cy) --  (\dx - \cx,-\dy);
        \draw[-,dashed] (\dx - \cx,-2*\dy)--(\dx,-2*\dy+\cy)--(2*\dx + \cx,-4*\dy)--
        (2*\dx,-4*\dy-\cy) --  (\dx - \cx,-2*\dy);
        \draw[-,dashed] ( - \cx,-1*\dy)--(0,-1*\dy+\cy)--(2*\dx + \cx,-5*\dy)--
        (2*\dx,-5*\dy-\cy) --  ( - \cx,-1*\dy);
        \draw[-,dashed] ( - \cx,-2*\dy)--(0,-2*\dy+\cy)--(2*\dx + \cx,-6*\dy)--
        (2*\dx,-6*\dy-\cy) --  ( - \cx,-2*\dy);
        
        \node at(2*\dx+\dd,-\dy) {$D_1[t]$};
        \node at(2*\dx+\dd,-2*\dy) {$D_1[t+1]$};
        \node at(2*\dx+\dd,-3*\dy) {$D_2[t+2]$};
        \node at(2*\dx+\dd,-4*\dy) {$D_2[t+3]$};
        \node at(2*\dx+\dd,-5*\dy) {$D_3[t+4]$};
        \node at(2*\dx+\dd,-6*\dy) {$D_3[t+5]$};
        
        \node at (-\ddist,0) (d1-0){};
        \draw[ultra thick] (d1-0) -- (n3-1);
        \node[rotate=\rotang] at (-\ddist-\cxp,\cyp) {$d_3[t]$};
        \node at (-\ddist,-\dy) (d1-1){};
        \draw[ultra thick] (d1-1) --  (n3-2);
        \node[rotate=\rotang] at (-\ddist-1.5*\cxp,-\dy+1.5*\cyp) {$d_3[t+1]$};
        \node at (-\ddist+\dx,0) (d2-0){};
        \draw[ultra thick] (d2-0) --  (n2-1);
        \node[rotate=\rotang] at (-\ddist+\dx-\cxp,0+\cyp) {$d_2[t]$};
        \node at (-\ddist+\dx,-\dy) (d2-1){};
        \draw[ultra thick] (d2-1) --  (n2-2);
        \node[rotate=\rotang] at (-\ddist+\dx-1.5*\cxp,-\dy+1.5*\cyp) {$d_2[t+1]$};
        \node at (-\ddist+2*\dx,0) (d1-0){};
        \draw[ultra thick] (d1-0) --  (n1-1);
        \node[rotate=\rotang] at (-\ddist+2*\dx-\cxp,0+\cyp) {$d_1[t]$};
        \node at (-\ddist+2*\dx,-\dy) (d1-1){};
        \draw[ultra thick] (d1-1) --  (n1-2);
        \node[rotate=\rotang] at (-\ddist+2*\dx-1.5*\cxp,-\dy+1.5*\cyp) {$d_1[t+1]$};
        
    \end{tikzpicture}
    \caption{Illustration for the terms included in $D_i[t]$ for a three node graph with $\tau_1 = \tau_2 = 2$. The first row of dots corresponds to $z_3[t],\ z_2[t],\ z_1[t]$ and the second corresponds to $z_3[t+1],\ z_2[t+1],\ z_1[t+1]$, and so on. Due to lack of space only $d_i[t]$ and $d_i[t+1]$ has been drawn. However, the pattern follows through the graph. From the figure we can see that $D_1[t+3] = D_2[t+3]-d_2[t]$, corresponding to the first part of \cref{alg:com_D}.}
    \label{fig:D_illustration}
\end{figure}
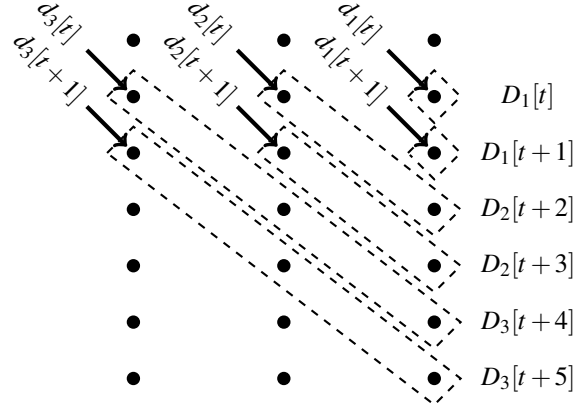
Thus the structure of the implementation of the control law remains the same, and only  $D_i[t]$ needs to be updated as new information become available. This can be done efficiently by a sweep starting at the bottom of the graph. After all, although in the receding horizon framework we assume we have a `new' set of planned disturbances at each point in time, these will share a large amount of information with the planned disturbances from the previous time step. This is the role of \cref{alg:com_D}, which we now explain.
\SetInd{0.5em}{1em}
\begin{algorithm}[]
\SetNoFillComment
%\SetKwInOut{Input}{input}\SetKwInOut{Output}{output}
\KwIn{changed $d_i[s]$}
\KwOut{updated $D_i$}
\vspace{-0.25cm}
\nonl \hrulefill  \\
\vspace{-0.15cm}
\tcc{Update Disturbances in parallel, $\mathcal{O}(1)$. Necessary even if there are no new disturbances}
\For{node i = N:1}{
\textbf{Send} $D_{i-1}[t+1+\sigma_{i}-1] = D_{i}[t+\sigma_{i}] - d_{i}[t]$ downstream.\\
\textbf{Discard} $D_i[t+\sigma_{i}]$ \\
}
\vspace{-0.25cm}
\nonl \dotfill\\
\vspace{-0.15cm}
\tcc{Update Disturbances due to new planned disturbances}
\For{node i = 1:N} {
\tcc{ For $t+\sigma_i\leq s <t+\sigma_N+H$}
    \If{$d_i[s-\sigma_i]$ \emph{changed} or $D_{i-1}[s]$ \emph{received}}
    {\textbf{send} $D_i[s] = D_{i-1}[s] + d_i[s-\sigma_i] $ upstream
    }    
}
\caption{Calculation of $D$}
\label{alg:com_D}
\end{algorithm}

All the $D_i[t]$'s where none of the underlying $d_j[t]$ were changed can easily be updated. For $1\leq \Delta \leq \tau_i-1$ the information in $D_i[t + \sigma_i + \Delta]$ will be useful in node $i$ at the next time step as  $$D_i[t+1+\sigma_i + \Delta-1] = D_i[t + \sigma_i + \Delta].$$
When $\Delta = 0$ the information can be used at the downstream neighbor as $D_i$ satisfies
\[
D_{i-1}[t+1+\sigma_{i}-1] = D_{i}[t+\sigma_{i}] - d_{i}[t].
\] 
These updates can be done in all nodes simultaneously and the time it takes is thus independent of the size of the graph. 

However,  the shifted sums $D_i$ has to be initialized at time zero, and also updated when new disturbances $d_i$ are planned for times $t>0$.
$D_i[t]$ only requires information from downstream, and can thus be calculated by a sweep starting at node one and going upstream.
Starting at the first node, any of the $d_1[s],\ t\leq s\leq t+\sigma_N+H$ that have changed are sent to node two. Then for every node $i$, the aggregate 
$D_i[s],\ t+\sigma_{i} \leq s\leq t+\sigma_N+H$
is sent upstream if it has changed.
This will be the case if node $i$ received
$D_{i-1}[s],\ t+\sigma_{i}\leq s \leq t+\sigma_N+H$ from its downstream neighbor, or if $d_i[s],\ t\leq s \leq \sigma_N-\sigma_i+H$ has changed.

 The steps for updating the disturbances are summarized in \cref{alg:com_D}.
While the algorithm is essentially a sweep through the graph in the upstream direction, it might be best to not implement it in the upstream sweep of \cref{alg:com_gen} as then the downstream sweep would have to be done after the upstream sweep, due to its need for the shifted disturbance vectors.
 On the other hand, the calculation does not rely on measurement from the system, and can thus be carried out either before or after \cref{alg:com_gen}.

We are now ready to discuss how the choice of $H$ affect the implementation of the controller. Firstly, a larger $H$ will lead to a very slight increase in the synthesis time due to more iterations of $X_N(t)$ being required. Secondly increasing $H$ will increase the memory requirement in node $N$, in that it requires to store $D_N[t+\Delta]$ for $0\leq \Delta \leq H$. Finally the requirement for the communication bandwidth when updating $D_i[t]$ will depend on the number of new disturbances $d_j[t]$, but is upper bounded by $H$ if $d_i[t] = 0$ for  $t>H$ and by $H+\sigma_N$ if $d_i[t] = 0$ for $t>H+\sigma_N-\sigma_i$. Thus if the bandwidth is limited, and a lot of new disturbances are expected to be planned, one might need to limit the size of $H$. Otherwise it can be freely chosen based on the nodes abilities to forecast disturbances.

\section{Simulations}\label{sec:simulations}

In this section we  explore the effect the feed-forward  horizon has on the controller performance through simulations.
In \cref{fig:cost_vs_horizon} the performance for different horizon lengths is shown. Two random nodes are affected by disturbances  of total size between minus one and zero and during a time interval of length between 1 and 5. The node level cost is given by $q_i = 1$. The production cost is given by $r_i = 10N$, where $N$ is the number of nodes. This is an attempt to keep the production cost similar for different values of $N$. There are 50 simulations done for each case with random disturbances as previously described. For all the cases when $N$ is the same, all the disturbances are the same for all the different horizons and delay values.
The horizon lengths are the same for all nodes, i.e it is assumed that $d_i[t+d] = 0$ for $d>H$.

We can see that a large part of the performance increase form having feed-forward can be achieved for short disturbance horizons. We can also see that for larger delays, and for more nodes, a longer horizon is needed to get the same effect. As a rule of thumb, at least for this example, it seems like a horizon longer than 2/3 of the total delay gives almost no effect, and even a horizon of 1/3 of the total delay gives most of the performance increase.

\begin{figure}[]
  \centering
  \includegraphics{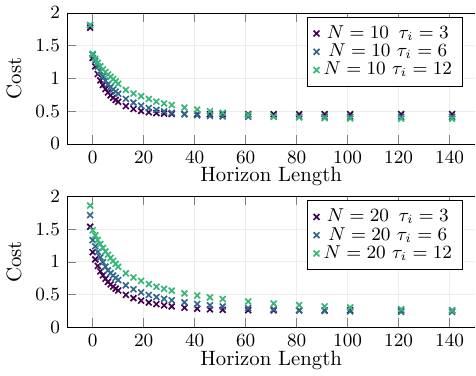}
  \caption{Simulations comparing the effect the planning horizon has on the performance. The data-points with highest cost for each configuration corresponds to not using the planned disturbances at all. While the the rest corresponds to using all planned disturbances announced up to $H$ time units ahead in every node.}
  \label{fig:cost_vs_horizon}
\end{figure}

\section{Proof Idea}\label{sec:proof_idea}
In this section we will describe the main idea behind the technique used to derive the results, which is to study a time shifted sum of the node-levels $z_i$, and a time shifted sum of the production $v_i$. This will allow the problem to be solved in terms of these shifted sums, essentially reducing it to a problem with scalar variables. Outside the disturbance horizon the problem can be solved by a Riccati equation in one variable. While inside the disturbance horizon the problem is solved using dynamic programming, where each step has scalar variables.

Now for the definitions of the shifted sums, let the sum of a shifted level $S_k$ and sum of a shifted production vector $V_k$ be defined as
\[
  S_k[t] = \sum_{i=1}^k z_i[t-\sigma_i], \qquad V_k[t] = \sum_{i=1}^k v_i[t-\sigma_i].
\] 
Also Let $\bar{V}_k[t] = V_k[t]+D_k[t]$ to shorten some expressions.

We illustrate the main idea by considering these shifted sums with a short example. Consider a path graph with $N>2$, $\tau_1 = 1$, and $\tau_2>1$. %Assume for simplicity that $u_i[t] = 0$ for all $t<0$.
Then  $S_2[3] =  z_1[3]+z_2[2]$. It can be checked that  
\[S_2[3] = z_1[0]+z_2[0] + \bar V_1[0]+\bar V_2[1]+\bar V_2[2] + u_1[-1] + u_2[-\tau_2].\]
Note that the internal transportation $u_1[0]$ and $u_1[1]$ have canceled, and the sum is thus independent of the internal transportation $u$ (except those with negative time index, which correspond to initial conditions).  Also note that any values for $z_2[2]$ and $z_1[3]$ can be achieved as long as $z_1[3]+z_2[2] = S_2[2]$. This follows from that $z_2[2]$ can take any value by choosing the appropriate value for $u_1[1]$. Thus the cost of $q_2z_2[2]^2 + q_1z_1[3]^2$ only depends on the value of $S_2[3]$, which is independent of all internal transportation $u_i[t],\ t\geq 0$. 
This means that all inputs except $u_1[1]$ can ignore its effect on the terms in $S_2[3]$. Furthermore,  $S_2[3]$, and thus the  corresponding cost, only depends on the sums $V_2[1]$ and $V_2[2]$ and not the individual productions $v_1[1]$, $v_1[2]$, $v_2[0]$ and $v_2[1]$.

This idea can be generalized. The cost function can be rewritten in terms of shifted levels, where each shifted level sum is independent of the internal transportation. Each shifted level can thus be minimized independently with respect to the internal transportation $u$. Just as in the example, the only constraint is that the shifted sum has the correct value. The optimal cost for a shifted vector $S_k[t]$ is given by the solution to
    \begin{equation}\label{eq:opt_zi}
    \begin{aligned}
      \minimize_{z_i}  \quad & \sum_{i=1}^k q_iz_i[t+\sigma_k-\sigma_i]^2 \\
     \textrm{subject to} \quad & S_k[t+\sigma_k] = c,
    \end{aligned}\end{equation}
where $c$ depends on the initial conditions, $V_i$, and $D_i$.
The problem has the solution $z_i[t+\sigma_k-\sigma_i] = \gamma_k/q_i\cdot c$ and cost $\gamma_kc^2$, where $\gamma_k$ is as given in \cref{alg:init}. Once the optimal level $z_k[1]$ is calculated,  the optimal value for $u_{k-1}[0]$ can be found from the dynamics, which gives
\begin{multline*}
  u_{k-1}[0] = (1-\frac{\gamma_{k}}{q_k})(z_{k}[0] + u_{k}[-\tau_{k}]) + v_k[0] +d_k[0]\\
  -\frac{\gamma_k}{q_k}\bar{V}_k[\sigma_k]
  -\frac{\gamma_k}{q_k}(m_{k-1}[0] + \sum_{i=1}^{k-1} \sum_{d=0}^{\tau_i-1}\bar{V}_i[\sigma_i+d]).
\end{multline*}
Where $m_k[t] = \sum_{i=1}^k(z_i[t] +  \sum_{\delta = 1}^{\tau_i}u_i[t-\delta])$. After inserting the optimal values for $v_k$ and $V_i$ the expression in \eqref{eq:opt_u} is achieved. Note that all the terms with coefficient $1$ corresponds to what would be in the node $k$ at time $t=1$  if $u_{k-1}[0] = 0$, and all terms with coefficient $-\gamma_k/q_k$  gives the total quantity in $S_k[1]$. 

Furthermore, each shifted level sum only depends on the shifted production sums $V_k[t]$, and not the individual productions $v_i[t],\ i\leq k$. The optimal way to produce a specific amount $V_k[t]$ with a shifted production vector can be found by solving a problem similar to \eqref{eq:opt_zi}, with the optimal $v_i$ given by $v_i[t-\sigma_i] = \rho_k/r_i\cdot V_k[t] \text{ for } i \leq k$, and the cost given by $\rho_kV_k[t]^2$, where $\rho_k$ is as given in \cref{alg:init}. So the calculations of $\gamma_i$ and $\rho_i$ in the first sweep in \cref{alg:init} thus corresponds to solving the optimal distribution for a shifted level vector $S_k$ and the optimal production for a shifted production vector $V_k$. This is covered in \cref{lem:shifted_inv}.

Assuming all $u_i$ are picked so that the shifted levels are optimized,  the total level cost is given by
  \begin{equation}\label{eq:cost_S}
  \sum_{t=0}^\infty\sum_{i=1}^N q_i z_i[t]^2 = \sum_{i=0}^N q_i z_i[0]^2
  + \sum_{i=1}^{N-1}\ \ \ \  \mathclap{\sum_{t=\sigma_i+1}^{\sigma_{i+1}}}\ \ \gamma_i S_i[t]^2 + \ \ \mathclap{\sum_{t=\sigma_N+1}^\infty}\ \   \gamma_NS_N[t]^2.
\end{equation}
And assuming all $v_i$ are picked so that each shifted production vector is optimized, then the total production cost is given by
\begin{equation}\label{eq:cost_V}
  \sum_{t=0}^\infty\sum_{i=1}^N r_iv_i[t]^2 =   \sum_{i=1}^{N-1}\sum_{t=\sigma_i}^{\sigma_{i+1}-1}\rho_i V_i[t]^2 + \sum_{t=\sigma_N}^\infty \rho_NV_N[t]^2.
\end{equation}
This allows the problem in \eqref{eq:problem} to be solved in terms of $V_i$ and $S_i$, reducing it to a problem in scalar variables. The scalar problem can be solved analytically, giving a closed form solution.

This is done by first solving for all $V_N[t]$ outside the disturbance horizon, that is for $t >\sigma_N + H$. Using that outside the disturbance horizon the dynamics for the shifted levels $S_N[t]$ are $S_N[t+1] = S_N[t] +V_N[t]$ gives that those $V_N[t]$ are given by the solution to
 \[\begin{aligned}
  \minimize_{V_N[t]} \quad & \sum_{\mathclap{t=\sigma_N+H+1}}^\infty \gamma_NS_N[t]^2 + \rho_NV_N[t]^2 \\
  \text{subject to} \quad & S_N[t+1] = S_N[t] +V_N[t].
\end{aligned}\]
This problem can be solved through a Riccati equation in one variable, giving expressions for  $V_N[t],\ t >\sigma_N + H$ in terms of $S_N[t]$. And more importantly, a cost to go in terms of $S_N[\sigma_N+H+1]$, that is $X_N[H+2]$ in \cref{alg:init}.

Each shifted sum in \eqref{eq:cost_S} can be expressed in terms of initial conditions, shifted production vectors, and shifted disturbance vectors,
\begin{equation*}
  S_k[\sigma_k+\Delta] = m_{k-1}[0]+ z_k[0] + \sum_{i=1}^{k-1}\sum_{d=\sigma_i}^{\sigma_{i+1}-1}\bar{V}_i[d] 
  + \sum_{d = \sigma_k}^{\mathclap{\sigma_k+\Delta-1}} \bar{V}_k[d] .
\end{equation*}
 Using the cost to go from the Riccati equation as the terminal cost allows the rest of the $V_i$'s to be found analytically using dynamic programming. When solving this problem the cost to go $X_i$ in \cref{alg:init} is used. The parameter $g$ also appears naturally in the solution to each dynamic programming step, and the upstream aggregate $\mu_i$ in \cref{alg:com_gen} is used to simplify the expressions.  This is covered in detail in \cref{lem:V_expr}.

  The resulting solution gives $V_i[t]$  in terms of initial conditions and the previous $V_k$'s in  \eqref{eq:cost_V}.
However, $V_1[0]$ is known, which gives $V_1[1]$ and so on. When rewriting  $V_i[0]$ in terms of only initial conditions the expressions can be simplified by using $\delta$ as defined in \cref{alg:com_gen}. This in turn requires $P$, $h$, and $\phi$ which were defined in \cref{alg:init} and $\Phi$ which was defined in \cref{alg:com_gen}. For the details see \cref{lem:mVsum}.

\section{Conclusions and Future Work}
In this paper we studied an optimal control problem on a simple transportation model. We showed that the optimal controller is highly structured, allowing for a distributed implementation consisting of two sweeps through the graph. The optimal controller can also handle planned disturbances in an efficient way.

We believe that the results presented here can be extended to more general graph structures. More specifically for any graph with the structure of a directed tree both the proof technique and the results could be extended. We plan to explore this in a future publication.

%%%%%%%%%%%%

%%%%%%%%%%%%%%%%%

\appendix
\section{Appendix}

The proof follows the structure of the proof idea. Before we start we restate the definition of $m_k$ which was mentioned in the proof idea.
\[ m_k[t] = \sum_{i=1}^k\left( z_i[t] + \sum_{d=1}^{\tau_i} u_i[t-d]\right)\]
Also, we let the product over an empty set be equal to one, e.g., $\prod_{i=2}^1 g_i = 1$.

The proof will derive the optimal inputs at time $t = 0$. As the problem has an infinite horizon, one can freely shift the time, and the results will thus holds for all $t \geq 0$.
We begin by showing that each shifted level can be optimally distributed and find the corresponding internal flows.
\begin{lemma}\label{lem:shifted_inv}
The following holds
\begin{enumerate}[(i)]
  \item Every shifted level $S_k$ satisfies
  \begin{multline*}S_k[t+\sigma_k+1] =\\ z_k[t] + u_k[t-\tau_k] +m_{k-1}[t]
   + \sum_{i=1}^{k-1} \sum_{d=0}^{\tau_i-1}\bar{V}_i[t+\sigma_i+d]
   +\bar{V}_k[t+\sigma_k]
   \end{multline*}
  \item Let $\gamma_k$ be defined as in \cref{alg:init}. The optimization problem
    \[\begin{aligned}
      \minimize_{z_i}  \quad & \sum_{i=1}^k q_iz_i[t+\sigma_k-\sigma_i]^2 \\
     \textrm{subject to} \quad & S_k[t+\sigma_k] = m,
    \end{aligned}\]
    has the solution $z_i = \gamma_k/q_im$ and the optimum value is given by $\gamma_km^2$.
  \item When $u$ is chosen optimally, the cost for \eqref{eq:problem} is given by
  \begin{equation*}
  \sum_{t=0}^\infty\sum_{i=1}^N q_iz_i[t]^2 = \sum_{i=0}^Nq_iz_i[0]^2
  + \sum_{i=1}^{N-1}\sum_{t=\sigma_i+1}^{\sigma_{i+1}}\gamma_iS_i[t]^2 + \sum_{t=\sigma_N+1}^\infty \gamma_NS_N[t]^2.
\end{equation*}
  Also, the optimal $u_k[0]$  is given by
\begin{multline*}
  u_{k-1}[0] = (1-\frac{\gamma_{k}}{q_k})(z_{k}[0] + u_{k}[-\tau_{k}]) + v_k[0] +d_k[0]\\
  -\frac{\gamma_k}{q_k}\bar{V}_k[\sigma_k]
  -\frac{\gamma_k}{q_k}(m_{k-1}[0] + \sum_{i=1}^{k-1} \sum_{d=0}^{\tau_i-1}\bar{V}_i[\sigma_i+d]).
\end{multline*}
   
\end{enumerate}
\end{lemma}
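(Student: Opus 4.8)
The plan is to prove the three parts in order, since each relies on the previous. For part (i), I would proceed by induction on $k$, unrolling the dynamics \eqref{eq:dynamics}. The base case $k=1$ follows by iterating $z_1[s+1] = z_1[s] - u_0[s] + u_1[s-\tau_1] + v_1[s] + d_1[s]$ — here $u_0 \equiv 0$ by convention at the first node — from $s=t$ up to $s = t+\sigma_1 = t$ (so $\sigma_1 = 0$ and the statement is essentially the one-step dynamics), carefully tracking that $\bar V_1[t] = v_1[t] + d_1[t] = V_1[t]+D_1[t]$ with $D_1[t] = d_1[t]$ and $\sigma_1 = 0$. For the inductive step, I would write $S_{k}[t+\sigma_{k}+1] = S_{k-1}[t+\sigma_{k-1}+1 + (\tau_{k-1}-1)] + \sum_{s} z_k$-contributions, using $\sigma_k = \sigma_{k-1}+\tau_{k-1}$; the key algebraic point (already previewed in Section \ref{sec:proof_idea}) is that the internal flows $u_{k-1}[s]$ for $s\ge t$ telescope against each other across the two levels $z_{k-1}$ and $z_k$ and cancel, leaving only the "boundary" flow $u_{k-1}[t-\tau_{k-1}]$ absorbed into $m_{k-1}[t]$ and the flow into node $k$. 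Iterating the $z_k$ dynamics over the window $[t, t+\tau_k)$ then produces the extra sum $\sum_{d=0}^{\tau_{k-1}-1}\bar V_{k-1}[t+\sigma_{k-1}+d]$ and the term $\bar V_k[t+\sigma_k]$, matching the claimed formula. The bookkeeping over the nested index windows $[\sigma_i, \sigma_{i+1})$ is where I expect the main friction — it is routine but must be done carefully with the empty-product / empty-sum conventions in force.

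For part (ii), this is an elementary equality-constrained quadratic program: minimize $\sum_{i=1}^k q_i z_i^2$ subject to a single linear constraint $\sum_{i=1}^k z_i = m$ (after the relabeling $z_i \leftrightarrow z_i[t+\sigma_k-\sigma_i]$, which part (i) shows is exactly the content of the constraint $S_k[t+\sigma_k]=m$). A Lagrange multiplier argument gives $z_i = \lambda/q_i$, and substituting into the constraint yields $\lambda = m / \sum_{i=1}^k (1/q_i)$, with optimal value $\lambda^2 \sum_i 1/q_i = m^2 / \sum_i 1/q_i$. It remains to identify $\gamma_k = 1/\sum_{i=1}^k (1/q_i)$, i.e. $1/\gamma_k = \sum_{i=1}^k 1/q_i$, which is exactly the recursion $\gamma_k = \gamma_{k-1}q_k/(\gamma_{k-1}+q_k)$, $\gamma_1 = q_1$ from the first sweep of Algorithm \ref{alg:init} (this is the harmonic-sum / parallel-resistance identity). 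So $z_i = (\gamma_k/q_i)\,m$ and the optimum is $\gamma_k m^2$, as claimed. The symmetric statement for $\rho_k$ and the production variables is identical in form and I would simply remark on it (it is the content used later for \eqref{eq:cost_V}).

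For part (iii), I would rewrite the total level cost $\sum_{t\ge 0}\sum_{i=1}^N q_i z_i[t]^2$ by grouping the terms $q_i z_i[t]^2$ according to which shifted sum $S_k$ they belong to: for a fixed "diagonal" time, the terms $q_i z_i[t+\sigma_k - \sigma_i]^2$ for $i=1,\dots,k$ are precisely those appearing in the subproblem of part (ii) for $S_k$ — but only while node $k$ is the "last" node whose shifted copy has not yet been overtaken, i.e. for the time window $\sigma_k < t \le \sigma_{k+1}$, with the last block $k=N$ running to infinity; the initial terms $q_i z_i[0]^2$ are the ones not covered by any full diagonal and are pulled out separately. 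Here I must be slightly careful: the $S_N$ block absorbs the tail $t > \sigma_N$, and the partition of the double sum must be verified to be exact (no term double-counted or dropped), which is the second place bookkeeping could go wrong. Once $u$ is chosen so that each $S_k$ diagonal is optimally distributed, part (ii) replaces the inner sum over $i$ by $\gamma_k S_k[\cdot]^2$, giving \eqref{eq:cost_S} / the displayed formula. Finally, for the optimal $u_{k-1}[0]$: from part (ii) the optimal $z_k[1]$ equals $(\gamma_k/q_k)\,S_k[1]$; plugging $t=0$, $s = 1$ into the dynamics $z_k[1] = z_k[0] - u_{k-1}[0] + u_k[-\tau_k] + v_k[0] + d_k[0]$ and solving for $u_{k-1}[0]$ gives $u_{k-1}[0] = z_k[0] + u_k[-\tau_k] + v_k[0] + d_k[0] - z_k[1]$; then substituting $z_k[1] = (\gamma_k/q_k) S_k[1]$ and expanding $S_k[\sigma_k+1]$ via part (i) with $t=0$ (which expresses it through $z_k[0]$, $u_k[-\tau_k]$, $m_{k-1}[0]$, and the $\bar V_i$ sums) and collecting the $(1 - \gamma_k/q_k)$ coefficient on the terms $z_k[0]+u_k[-\tau_k]$ yields exactly the stated multi-line formula.

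The main obstacle throughout is not any single deep idea — the cancellation of internal flows in (i) and the harmonic-sum identity in (ii) are the conceptual core, and both are clean — but rather the careful management of the index windows $[\sigma_i,\sigma_{i+1})$, the time shifts $\sigma_k - \sigma_i$, the initial-condition terms, and the empty-product/empty-sum conventions, so that the partition of the cost in (iii) and the unrolling in (i) are provably exact. I would organize the proof so that part (i) establishes one clean "structural identity" for $S_k$ that is then quoted verbatim in both (ii)'s constraint reformulation and (iii)'s derivation of $u_{k-1}[0]$, to avoid repeating the unrolling.
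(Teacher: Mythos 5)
Your plan is correct and follows essentially the same route as the paper's proof: induction on $k$ with telescoping of the internal flows for (i), the harmonic-sum identity $1/\gamma_k=\sum_{i=1}^k 1/q_i$ for the weighted quadratic program in (ii) (the paper argues optimality by equal marginal costs rather than a Lagrange multiplier, which is the same computation), and the diagonal regrouping of the cost plus back-substitution of the optimal $z_k[1]$ into the dynamics for (iii). The one step you should make explicit when writing it out is the achievability argument in (iii): after decoupling the cost into per-diagonal subproblems you must show the optima can be attained simultaneously, which the paper does by choosing $u_{j-1}[t-\sigma_j-1]$ to set $z_j[t-\sigma_j]$ optimally for $2\le j\le k$ and observing that $z_1$ is then automatically optimal because the constraint $S_k=m$ is preserved.
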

\begin{proof}
For $k=1$ (i) reduces to the dynamics. Now assume that (i) holds for $k-1$. It follows from the definition of $S_k$ that 
\begin{equation}\label{eq:S_ind_shift}
S_k[t+\sigma_k+1] = z_k[t+1] + S_{k-1}[t+\sigma_k+1].
\end{equation}
It holds that
 \begin{equation}\label{eq:Sk_step}
 S_k[t+1] = S_k[t]+\bar{V}_k[t] + u_k[t-\sigma_k-\tau_k],
 \end{equation}
 since $u_i[t-\sigma_i-\tau_i]$ will cancel out for $i<k$.
  This allows $S_{k-1}[t+\sigma_k+1]$ to be rewritten as
  \begin{equation}\label{eq:sk_big_step}
    S_{k-1}[t+\sigma_k+1] =  S_{k-1}[t+\sigma_{k-1}+1]+\ \ \mathclap{\sum_{\Delta=\sigma_{k-1}+1}^{\sigma_k}}\ \ \ \bar{V}_{k-1}[t+\Delta] + \sum_{\Delta =0}^{\tau_{k-1}-1} u_{k-1}[t-\Delta].
  \end{equation}
  Using the induction assumption that (i) holds for $k-1$, \eqref{eq:sk_big_step} and the dynamics,
    \begin{equation}\label{eq:dyn_in_proof}
    z_k[t+1] = z_k[t] -u_{k-1}[t]+ u_k[t-\tau_k]+v_k[t]+d_k[t],
  \end{equation} 
  allows \eqref{eq:S_ind_shift} to be rewritten as
  \begin{multline*}
   S_k[t+\sigma_k+1] = z_k[t] - u_{k-1}[t] + u_k[t-\tau_k]+v_k[t]+d_k[t] \\
   +z_{k-1}[t] + u_{k-1}[t-\tau_{k-1}] +m_{k-2}[t] 
   + \sum_{i=1}^{k-2} \sum_{d=0}^{\tau_i-1}\bar{V}_i[t+\sigma_i+d]\\
   +\bar{V}_{k-1}[t+\sigma_{k-1}] 
   +\ \ \mathclap{\sum_{\Delta=\sigma_{k-1}+1}^{\sigma_k}}\ \ \  \bar{V}_{k-1}[t+\Delta] + \sum_{\Delta =0}^{\tau_k-1} u_{k-1}[t-\Delta].
  \end{multline*}
In the above it holds that
\[
z_{k-1}[t]+u_{k-1}[t-\tau_{k-1}] -u_{k-1}[t]+\sum_{\Delta =0}^{\mathclap{\tau_{k-1}-1}} u_{k-1}[t-\Delta] + m_{k-2}[t] = m_{k-1}[t]
\]
  and  
  \begin{multline*}
    v_k[t]+d_k[t] + \sum_{i=1}^{k-2} \sum_{d=0}^{\tau_i-1}\bar{V}_i[t+\sigma_i+d] + \bar{V}_{k-1}[t+\sigma_{k-1}] + \ \ \mathclap{\sum_{\Delta=\sigma_{k-1}+1}^{\sigma_k}}\ \ \ \bar{V}_{k-1}[t+\Delta]\\
     = \sum_{i=1}^{k-1} \sum_{d=0}^{\tau_i-1}\bar{V}_i[t+\sigma_i+d]
   +\bar{V}_k[t+\sigma_k].
  \end{multline*}
  And thus (i) holds for $k$ as well.

For (ii) the proposed solution satisfies the constraint as
\[ 
    \sum_{i=1}^k \frac{1}{q_i} = \frac1{\gamma_k}.
\]
If the proposed solution was not optimal then it would be possible to improve it by increasing $z_i$ by epsilon and decreasing $z_j$ by epsilon for $i,j\leq K$ as the problem is convex. However
\[
    \frac{\partial}{\partial z_i} q_iz_i[t+\sigma_k-\sigma_i]^2 = 2\gamma_km
\]
for $z_i[t+\sigma_k-\sigma_i] = \gamma_k/q_i m$ and all $i$, and thus the proposed solution is optimal.

  For (iii)   note that the sum $\sum_{t=0}^\infty\sum_{i=1}^N q_iz_i[t]^2$ can be written in terms of shifted level vectors as follows,
\begin{multline}
\label{eq:cost_decomp}
  \sum_{t=0}^\infty\sum_{i=1}^N q_iz_i[t]^2 = \\
   \sum_{i=0}^Nq_iz_i[0]^2
  + \sum_{i=1}^{N-1}\sum_{t=\sigma_i+1}^{\sigma_{i+1}} \sum_{j=1}^iq_jz_j[t-\sigma_j]^2 + \ \ \mathclap{\sum_{t=\sigma_N+1}^\infty}\ \ \ \ \  \sum_{j=1}^N q_jz_j[t-\sigma_j]^2.
\end{multline}
The inner sums corresponds to the objective in (ii). From (i) it follows that $S_k[t]$, $t\leq \sigma_{i+1}$ is independent of $u_j[t], \ \forall t \geq 0, \forall j$ and that $S_N[t]$ is independent of $u_j[t],\ \forall t,j$. Thus each shifted level sum in \eqref{eq:cost_decomp} is independent of the internal flows. Now consider arbitrary, but fixed productions $V$ and disturbances $D$. Then by (i) the sum of all shifted levels are fixed. If there exists $u$ so that each sum over shifted levels in \eqref{eq:cost_decomp} is the optimal solution to the problem in (ii), then those inputs must be optimal for the given $V$ and $D$. By choosing $u_{j-1}[t-\sigma_j-1]$ so that $z_j[t-\sigma_j]$  is optimal for (ii) for $2\leq j \leq i$ gives that all $z_j[t-\sigma_j]$ are optimally for $2\leq j \leq i$. However, since the constraint will always be satisfied, $z_1[t]$ will be optimal as well.
Using (i), the optimal $z_k[1]$ from (ii) is given by
\begin{equation*}
  z_k[1] = \frac{\gamma_k}{q_k}\Big(m_{k-1}[0] +z_k[0] + u_k[-\tau_k]  
  +  \sum_{i=1}^{k-1}\sum_{\Delta = 0}^{\tau_i-1}\bar{V}_i[\sigma_i +\Delta] + \bar{V}_k[\sigma_k] \Big).
\end{equation*}
Inserting the dynamics in \eqref{eq:dyn_in_proof} into the LHS and solving for $u_{k-1}[0]$ gives the expression in (iii).
\end{proof}

Now we will give the solution to the optimization problem which will arise in the dynamic programming problem that will need to be solved in the next lemma.
\begin{lemma}\label{lem:aux}
Let $X_i$ and $g_i(j)$ be defined as in \cref{alg:init}. Then
\begin{enumerate}[(i)]
 
\item Let $j\geq1$. The optimization problem
\[
  \minimize_x \quad X_i(j+1)(a+b + x)^2 + \gamma_i(a+x)^2 + \rho_ix^2
\]
has minimizer 
\[x = -\frac{X_i(j)}{\rho_i}(a+g_i(j+1)b),\]
 with optimum value $X_i(j)\cdot(a+g_i(j+1)b)^2 + f(b)$.
\item The optimization problem
\[
  \minimize_x \quad X_{i+1}(1)(a+b+ x)^2 + \gamma_{i}(a+x)^2 + \rho_{i}x^2
\]
has minimizer 
\[
  x = -\frac{X_{i}(\tau_{i})}{\rho_{i}}(a+g_{i+1}(1)b),
\]
 with optimum value $X_{i}(\tau_{i})\cdot(a+g_{i+1}(1)b)^2+f(b)$.
\end{enumerate}
\end{lemma}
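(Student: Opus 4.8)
The plan is to prove both parts of Lemma~\ref{lem:aux} by the same elementary calculation, since (ii) is literally the $j=\tau_i$ instance of the recursion in (i) with $X_{i+1}(1)$ playing the role of $X_i(j+1)$ and $g_{i+1}(1)$ playing the role of $g_i(j+1)$, and with the convention $X_i(\tau_i)$ replacing $X_i(j)$. So I would first treat the generic unconstrained scalar quadratic minimization
\[
  \minimize_x \quad A(a+b+x)^2 + B(a+x)^2 + C x^2,
\]
where $A,B,C>0$ (here $A=X_i(j+1)$, $B=\gamma_i$, $C=\rho_i$). This is a strictly convex quadratic in $x$, so the minimizer is found by setting the derivative to zero: $A(a+b+x)+B(a+x)+Cx=0$, giving
\[
  x^\star = -\frac{A(a+b)+Ba}{A+B+C} = -\frac{(A+B)a + A b}{A+B+C}.
\]

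Next I would match this with the claimed form $x = -\frac{X_i(j)}{\rho_i}(a + g_i(j+1)b)$. Comparing coefficients of $a$ and $b$ separately, the claim is equivalent to the two identities
\[
  \frac{X_i(j)}{\rho_i} = \frac{A+B}{A+B+C}, \qquad \frac{X_i(j)}{\rho_i}\,g_i(j+1) = \frac{A}{A+B+C}.
\]
Substituting $A = X_i(j+1)$, $B=\gamma_i$, $C=\rho_i$, the first identity reads $X_i(j) = \frac{\rho_i(X_i(j+1)+\gamma_i)}{X_i(j+1)+\gamma_i+\rho_i}$, which is exactly the recursion for $X_i(t-1)$ from $X_i(t)$ in line~10 of Algorithm~\ref{alg:init} (and line~9 / the ``not for node $N$'' line handles the $j=\tau_i$ case where $X_{i+1}(1)$ appears, giving part (ii)). The second identity then reduces, after dividing by the first, to $g_i(j+1) = \frac{X_i(j+1)}{X_i(j+1)+\gamma_i}$, which is precisely the definition of $g_i(j+1)$ in Algorithm~\ref{alg:init} (again with $g_{i+1}(1)=\frac{X_{i+1}(1)}{X_{i+1}(1)+\gamma_i}$ for the boundary case). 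So both identities hold by definition, and the claimed minimizer is correct.

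Finally I would compute the optimum value by plugging $x^\star$ back in. The cleanest route is to write the objective as a sum of its value at $x^\star$ plus the perfect-square remainder $(A+B+C)(x-x^\star)^2$; the minimum is then just the constant term. A short computation gives the minimum value $\frac{A(A+B)}{A+B+C}\,b^2$ when $a$ is dropped $\ldots$ more carefully, collecting terms one finds the minimum equals $X_i(j)(a + g_i(j+1)b)^2 + f(b)$ where $f(b) = \big(A - \tfrac{A^2}{A+B+C} - X_i(j)g_i(j+1)^2\big) b^2$ is a function of $b$ alone (using the two identities above to simplify the $a^2$ and $ab$ coefficients down to exactly $X_i(j)$ and $2X_i(j)g_i(j+1)$ respectively). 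Since the lemma only needs $f$ to be \emph{some} function of $b$ (it will be absorbed or cancelled in the calling context of Lemma~\ref{lem:V_expr}), I would not simplify $f$ further. The main obstacle here is purely bookkeeping: keeping the coefficients of $a^2$, $ab$, $b^2$ straight and recognizing that the $a$-dependent part collapses exactly to $X_i(j)(a+g_i(j+1)b)^2$ precisely because of how $X_i$ and $g_i$ were defined in Algorithm~\ref{alg:init} — there is no real analytic difficulty, only the risk of algebraic slips, so I would organize the computation as the two coefficient-matching identities above rather than expanding blindly.
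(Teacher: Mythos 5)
Your proposal is correct and follows essentially the same route as the paper: both reduce the problem to the generic scalar quadratic $c_1(a+b+x)^2+c_2(a+x)^2+c_3x^2$, obtain the minimizer $-\frac{(c_1+c_2)a+c_1b}{c_1+c_2+c_3}$ by differentiation, and identify the coefficients with the recursions defining $X_i$ and $g_i$ in Algorithm~\ref{alg:init}, with the $a$-dependent part of the optimum collapsing to $X_i(j)(a+g_i(j+1)b)^2$. Your coefficient-matching organization and your explicit expression for $f(b)$ are consistent with the paper's expansion, so no gap remains.
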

\begin{proof}
We will show that the optimization problem
\[
  \minimize_x \quad  c_1(a+b+x)^2 + c_2(a+x)^2+c_3x^2
\]
has the solution
\[
  x = -\frac{c_1+c_2}{c_1+c_2+c_3}\big(a + \frac{c_1}{c_1+c_2}b\big)
\]
  and the minimal value is on the form
\[
  \frac{c_3(c_1+c_2)}{c_1+c_2+c_3}\left(a + \frac{c_1}{c_1+c_2}b\right)^2  + f(b),
\]
where $f(b)$ is independent of $a$.
The lemma then follows by applying the above and using the definition for $X_i$ and $g_i(j)$.

  There exits a unique solution as the problem is strictly convex.
  Differentiating the objective function with respect to $x$ gives that the optimal $x$ is given by
\[
  x = -\frac{1}{c_1+c_2+c_3}\big((c_1+c_2)a + c_1b\big)
\]
from which the proposed $x$ follows.
  The objective function can be rewritten as
  \[
    c_1(a+b)^2 + c_2a^2 + 2[(c_1+c_2)a + c_1b)]x + (c_1 + c_2 + c_3)x^2.
  \]
Inserting the minimizer gives
\[
  \frac{1}{c_1+c_2+c_3}\Big((c_1+c_2+c_3)(c_1(a+b)^2+c_2a^2) - [(c_1+c_2)a + c_1b]^2\Big).
\]
The first term can be written as
\begin{multline*}
  (c_1+c_2+c_3)(c_1(a+b)^2+c_2a^2)  \\=(c_1+c_2+c_3)\big[(c_1+c_2)a^2 + 2c_1ab + c_1b^2 \big]\\
  =(c_1+c_2)^2a^2 + 2(c_1+c_2)c_1ab + c_1^2b^2 + \\
  c_3(c_1+c_2)a^2 + 2c_1c_3ab + (c_2+c_3)c_1b^2.
\end{multline*}
Which gives that the objective function has the minimum value
\[
  \frac{1}{c_1+c_2+c_3}\Big[c_3(c_1+c_2)a^2 + 2c_1c_3ab + (c_2+c_3)c_1b^2\Big].
\]
The last term is independent of $a$. The dependence on a is thus given by
\[
  \frac{c_3(c_1+c_2)}{c_1+c_2+c_3}\left(a^2 + \frac{2c_1}{c_1+c_2}ab\right) = \frac{c_3(c_1+c_2)}{c_1+c_2+c_3}\left(a + \frac{c_1}{c_1+c_2}b\right)^2 + f(b)
\]
\end{proof}

Armed with the results from the previous lemma, we will now apply dynamic programming to \eqref{eq:problem}. We will show that the problem can be solved in terms of the shifted levels $S_k$, shifted productions $V_k$ and shifted disturbances $D_k$. Outside of the horizon the problem can be solved using the Riccati equation. Using the cost to go given by the Riccati equation as initialization we can apply dynamic programming using the results from the previous lemma.
\begin{lemma}\label{lem:V_expr}
Let  $\gamma_k$, $\rho_k$, $X_k$, $g_k$, $\mu_k$ be defined as in \cref{alg:init,alg:com_gen}. Let for $1\leq k\leq N-1$ and $1\leq \Delta \leq\tau_k$,  and for $k=N$ and $1 \leq \Delta \leq H+2$ 
\begin{multline} \label{eq:xi}
   \xi_k[\Delta-1] = m_{k-1}[0] + z_k[0] +  \sum_{i=1}^{k-1}\sum_{d = \sigma_i}^{\sigma_{i+1}-1}\bar{V}_i[d] \\
   + \sum_{d=0}^{\tau_k-1} \Big(u_k[-(\tau_k-d)]+D_k[\sigma_{k}+d]\Big)\prod_{j=\Delta+1}^{d+1}g_k(j) 
 \\
    +\sum_{d = \sigma_k}^{\sigma_k + \Delta-2}V_k[d] + \mu_{k+1}[0]g_{k+1}(1)\prod_{j=\Delta+1}^{\tau_k} g_k(j).
\end{multline}
Then the optimal $V_k$ for \eqref{eq:problem} is given by
\begin{equation*}
  V_k[\sigma_k+(\Delta-1)] = -\frac{X_k(\Delta)}{\rho_k}\xi_k[\Delta-1].
\end{equation*}
The optimal individual productions are given by
\[ v_k[\Delta-1] = \frac{\rho_k}{r_k}V_k[\sigma_k+(\Delta-1)].\]
\end{lemma}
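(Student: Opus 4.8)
The plan is to reduce \eqref{eq:problem} to a scalar backward dynamic program over the shifted production vectors and to solve it stage by stage with Lemma \ref{lem:aux}. By Lemma \ref{lem:shifted_inv}(iii), once the internal flows $u$ are chosen so that every shifted level is optimally distributed, the objective of \eqref{eq:problem} splits into the level cost \eqref{eq:cost_S} and the production cost. For a prescribed value of a shifted production vector $V_k[t]=\sum_{i=1}^k v_i[t-\sigma_i]$, the cheapest realization is obtained by the same argument as in Lemma \ref{lem:shifted_inv}(ii), now applied to the $v_i$'s rather than the $z_i$'s: $v_i[t-\sigma_i]=(\rho_k/r_i)V_k[t]$ for $i\le k$, with cost $\rho_k V_k[t]^2$ (the realizations for distinct $(k,t)$ use disjoint sets of productions, so this is a genuine scalar problem). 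Taking $i=k$ and $t=\sigma_k+\Delta-1$ gives the second displayed formula of the lemma, and summing over $t$ gives \eqref{eq:cost_V}. It therefore remains to minimize \eqref{eq:cost_S}$\,+\,$\eqref{eq:cost_V} over the scalars $V_k[t]$, using that, after fully expanding via Lemma \ref{lem:shifted_inv}(i), each $S_k[\sigma_k+\Delta]$ is an affine function of initial data, the full windows $\sum_{d=\sigma_i}^{\sigma_{i+1}-1}\bar V_i[d]$ of nodes $i<k$, and the partial window $\sum_{d=\sigma_k}^{\sigma_k+\Delta-1}\bar V_k[d]$, and is free of all internal flows $u_i[t]$, $t\ge0$.

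First I would dispose of the infinite tail. For $t>\sigma_N+H$ Assumption \ref{assump:horizon} forces $D_N[t]=0$, so there $\bar V_N=V_N$ and $S_N[t+1]=S_N[t]+V_N[t]$; minimizing $\sum_{t\ge\sigma_N+H+2}\gamma_N S_N[t]^2+\sum_{t\ge\sigma_N+H+1}\rho_N V_N[t]^2$ over those $V_N[t]$ is a scalar infinite-horizon LQ problem whose value is $X\,S_N[\sigma_N+H+1]^2$ with $X$ the positive root of $X=\rho_N(X+\gamma_N)/(X+\gamma_N+\rho_N)$, that is $X=X_N(H+2)$ as in Algorithm \ref{alg:init}. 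This turns what is left into a finite backward recursion with terminal cost $X_N(H+2)\,S_N[\sigma_N+H+1]^2$, and one checks $S_N[\sigma_N+H+1]=\xi_N[H+1]$ using $\tau_N=H+1$, $u_N\equiv0$, and $\mu_{N+1}=0$.

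The core is a downward induction over the scalar unknowns in the order $V_N[\sigma_N+H],V_N[\sigma_N+H-1],\dots,V_N[\sigma_N],V_{N-1}[\sigma_{N-1}+\tau_{N-1}-1],\dots,V_1[\sigma_1]$, with inductive claim: at the stage where $V_k[\sigma_k+\Delta-1]$ is eliminated, the minimum over the already-eliminated variables of the total cost equals $X_k(\Delta)\,\xi_k[\Delta-1]^2$ plus a term independent of the remaining variables, with $\xi_k[\Delta-1]$ as in \eqref{eq:xi} read as an affine function of those remaining variables. Granting the claim, the stage problem becomes $\min_x X_k(\Delta+1)(a+b+x)^2+\gamma_k(a+x)^2+\rho_k x^2$ with $x=V_k[\sigma_k+\Delta-1]$, where $a+x=S_k[\sigma_k+\Delta]$ supplies the current level-cost term and $b$ gathers the contributions not yet ``arrived'' (the $u_k$ and $D_k$ terms indexed by $d\ge\Delta$ together with the downstream aggregate $\mu_{k+1}[0]$); at a node boundary $\Delta=\tau_k$ with $k<N$ the future weight is $X_{k+1}(1)$ and Lemma \ref{lem:aux}(ii) replaces (i), while for node $N$ the first step uses (i) with the Riccati value $X_N(H+2)$. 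Lemma \ref{lem:aux} then returns the minimizer $x=-(X_k(\Delta)/\rho_k)(a+g_k(\Delta+1)b)$ and residual cost $X_k(\Delta)(a+g_k(\Delta+1)b)^2+f(b)$; identifying $a+g_k(\Delta+1)b$ with $\xi_k[\Delta-1]$ gives the first displayed formula of the lemma and advances the induction, the base case being the tail step above. Passing from node $k+1$ to node $k$ relies on the decomposition $\xi_{k+1}[0]=m_k[0]+\sum_{i\le k}\sum_{d=\sigma_i}^{\sigma_{i+1}-1}\bar V_i[d]+\mu_{k+1}[0]$, which is exactly where the definition of $\mu_i$ in Algorithm \ref{alg:com_gen} enters.

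The main obstacle is this bookkeeping. One must check that iterating the map $b\mapsto a+g_k(\Delta+1)b$ of Lemma \ref{lem:aux} across node $k$'s window reproduces precisely the nested products $\prod_{j=\Delta+1}^{d+1}g_k(j)$ and $g_{k+1}(1)\prod_{j=\Delta+1}^{\tau_k}g_k(j)$ of \eqref{eq:xi}; that the terms $u_k[-(\tau_k-d)]$ and $D_k[\sigma_k+d]$ enter with weight $1$ once $d\le\Delta-1$, having ``arrived'' by the time the matching level cost is charged, and with the $g$-product weight while $d\ge\Delta$; that the $V_k[d]$ with $d\ge\sigma_k+\Delta-1$ have genuinely been optimized out and do not reappear; and that the index conventions of Algorithm \ref{alg:init} are respected (the offset between $X_k(\Delta)$ and $X_k(\Delta+1)$, the role of $X_k(\tau_k)$ versus $X_{k+1}(1)$ at a boundary, and $\tau_N=H+1$). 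Once \eqref{eq:xi} is confirmed the two formulas of the lemma follow immediately; re-expressing the $V_k$ purely in terms of the initial data, using that $V_1[0]$ is known, is left to Lemma \ref{lem:mVsum}.
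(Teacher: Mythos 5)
Your proposal is correct and follows essentially the same route as the paper's proof: reduce to scalars via the shifted sums of Lemma \ref{lem:shifted_inv}, solve the tail $t>\sigma_N+H$ with a scalar Riccati equation to get the terminal weight $X_N(H+2)$, and then run a backward induction on the cost-to-go $X_k(\Delta)\xi_k[\Delta-1]^2$ using Lemma \ref{lem:aux}(i) within a node and (ii) at node boundaries, with the same identifications of $a$, $b$, $x$ and of $\mu_{k+1}[0]$. The bookkeeping you flag (that iterating $b\mapsto a+g_k(\Delta+1)b$ reproduces the nested $g$-products in \eqref{eq:xi}) is exactly the verification the paper carries out, so the plan is sound as stated.
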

\begin{proof}
By \cref{lem:shifted_inv}-(i) and \eqref{eq:Sk_step} each shifted inventory level $S_k[\sigma_k +\Delta]$ with $1\leq \delta \leq \tau_{k}$ satisfies
\begin{equation}\label{eq:Sk_delta_shift}
  S_k[\sigma_k+\Delta] = m_{k-1}[0]+ z_k[0] + \sum_{i=1}^{k-1}\sum_{d=\sigma_i}^{\sigma_{i+1}-1}\bar{V}_i[d] 
  + \sum^{\tau_k}_{\mathclap{d = \tau_k-(\Delta-1)}} u_k[-d] 
  + \sum_{\mathclap{d = \sigma_k}}^{\mathclap{\sigma_k+\Delta-1}} \bar{V}_k[d].
\end{equation}
By (iii) in \cref{lem:shifted_inv} the cost can be rewritten as
\begin{equation*}
  \sum_{t=0}^\infty\sum_{i=1}^N q_iz_i[t]^2 = \sum_{i=0}^Nq_iz_i[0]^2 %+ \sum_{t=1}^{\tau_1}q_1z_1[t]^2 \\
  + \sum_{i=1}^{N-1}\sum_{t=\sigma_i+1}^{\sigma_{i+1}}\gamma_iS_i[t]^2 + \sum_{\mathclap{t=\sigma_N+1}}^\infty \gamma_NS_N[t]^2.
\end{equation*}
And similarly, by \cref{lem:shifted_inv}-(ii), the optimal cost for a shifted production $V_i[t]$ is given by $\rho_iV_i[t]^2$ and individual productions are given by $v_i[t] = \rho_i/r_i\cdot V_i[t+
\sigma_i]$. This gives the total production cost in terms of $V_i$ as
\[
  \sum_{t=0}^\infty\sum_{i=1}^N r_iv_i[t]^2 =   \sum_{i=1}^{N-1}\sum_{t=\sigma_i}^{\sigma_{i+1}-1}\rho_iV_i[t]^2 + \sum_{t=\sigma_N}^\infty \rho_NV_N[t]^2.
\]

We can thus solve the problem in terms of $S_i$ and $V_i$, and then recover the optimal $v_i$. To that end define the cost to go for $1\leq k \leq N-1$ and $1\leq \Delta \leq \tau_k$
\begin{multline*}
  \Gamma_k[\Delta] = \sum_{t = \sigma_k+\Delta}^{\sigma_{k+1}}\Big(\gamma_kS_k[t]^2+\rho_kV_k[t-1]^2\Big) 
  + \sum_{i=k+1}^{N-1}\sum_{\sigma_i+1}^{\sigma_{i+1}}\Big( \gamma_iS_i[t]^2+\rho_iV_i[t-1]^2\Big)\\
  + \sum_{t=\sigma_N+1}^\infty \Big( \gamma_NS_N[t]^2+\rho_NV_N[t-1]^2\Big).
\end{multline*}
And for $k = N$ and $\Delta \geq 1$
\[
  \Gamma_N[\Delta] =  \sum_{t=\sigma_N+\Delta}^\infty \Big( \gamma_NS_N[t]^2+\rho_NV_N[t-1]^2\Big).
\]
We will show for $1\leq k \leq N-1$ and $1 \leq \Delta \leq \tau_i$, and for $k = N$ and $1\leq \Delta \leq H+2$, that
\begin{equation}\label{eq:cost_to_go}
  \Gamma_k[\Delta] = X_{k}(\Delta)\xi_k[\Delta-1]^2 + f(b), 
\end{equation}
where $f(b)$ is independent of $V_k[t]$. $f(b)$ can thus be ignored in the optimization of $V_k[t]$.

Using \cref{lem:shifted_inv}-(i) combined with \eqref{eq:Sk_step} and that all $D_N[t] =0$ for $t>H+\sigma_N$ it follows that the optimal $V_N[t]$ for $t>\sigma_N+H$ is given by the solution to the problem
\[\begin{aligned}
  \minimize_{V_N[t]} \quad & \sum_{t=\sigma_N+H+1}^\infty \gamma_NS_N[t]^2 + \rho_NV_N[t]^2 \\
  \text{subject to} \quad & S_N[t+1] = S_N[t] +V_N[t] \\
  &S_N[\sigma_N+H+1] = m_N[0] + \sum_{i=1}^{N-1}\ \ \ \ \mathclap{\sum_{\delta=\sigma_i}^{\sigma_{i+1}-1}}\ \ \ \bar{V_i}[\delta] +\ \mathclap{\sum_{\delta = \sigma_{N}}^{\sigma_N+H}}\ \ \ \  \bar{V}_N[\delta].
\end{aligned}\]
This is a standard LQR problem and the solution can be found by solving the following Riccati equation %$X = dare(1,1,\gamma_N,\rho_N)$
\[
  X = X-X^2/(\rho_N+ X)+\gamma_N \Rightarrow X =  \frac{\gamma_N}{2}+\sqrt{\gamma_N\rho_N+\frac{\gamma_N^2}{4}}.
\]
 Now let $X_N(H+2) = X-\gamma_N$. Then  $\Gamma_N[\sigma_N+H+2]$ is given by
 \[\Gamma_N[\sigma_N+H+2] = S_N[\sigma_N+H+1]^2X_N(H+2).\] 
Note that the cost for $S_k[\sigma_N+H+1]$ is not part of $\Gamma_N[\sigma_N+H+2]$, but it is part of the cost to go given by the solution $X$ to the Riccati equation. Furthermore, the optimal $V_N[t]$ for $t = \sigma_N+H+1$ is given by 
\begin{equation*}
  V_N[t] = -\frac{X}{X+\rho_N}S_N[t] 
  = -\frac{X_N(H+1)+\gamma_N}{X_N(H+1)+\gamma_N+\rho_N}S_N[t] = -\frac{X_N(H)}{\rho_N}S_N[t].
\end{equation*}
For $\Delta = H+2$ and $k=N$ the expression for $\xi_k[\Delta-1]$ reduces to
\[
  \xi_N[H+1] = m_N[0] +  \sum_{i=1}^{N-1}\sum_{d = \sigma_i}^{\sigma_{i+1}-1}\bar{V}_i[d] + \sum_{d = \sigma_N}^{\sigma_N + H}\bar{V}_N[d],
\]
as $\mu_{N+1} =  0$, $u_N=0$ and $D_N[t] = 0$ for $t>\sigma_N+H$. By \eqref{eq:Sk_delta_shift} $\xi_N[H+1] = S_N[\sigma_N+H+1]$ and thus the lemma and \eqref{eq:cost_to_go} holds for $k=N$ and $\Delta = H+2$.

Assume that \eqref{eq:cost_to_go} holds for $k+1$ and $\Delta = 1$. Then the optimal $V_{k}[\sigma_{k+1}-1]$ is given by the minimizer for 
\[ \Gamma_{k}[\tau_{k}] = \Gamma_{k+1}[1] + \gamma_{k}S_k[ \sigma_{k+1}]^2 + \rho_k V_k[\sigma_{k+1}-1]^2.\]
Using the assumption for the cost to go in \eqref{eq:cost_to_go} gives that $\Gamma_{k+1}[1] =X_{k+1}(1)\xi_{k+1}[0]^2$ and thus the optimal $V_{k}[\sigma_{k+1}-1]$ is given by the optimal value for the problem
\begin{equation*}
  \minimize_{V_{k}[\sigma_{k+1}-1]}\quad X_{k+1}(1)\xi_{k+1}[0]^2+\gamma_{k}S_{k}[\sigma_{k+1}]^2 
  + \rho_{k}V_{k}[\sigma_{k+1}-1]^2.
\end{equation*}
For $\Delta = 1$  \eqref{eq:xi} reduces to
\begin{equation}\label{eq:delta1reduce}
  \xi_k[0] = m_{k-1}[0] +\sum_{i=1}^{k-1}\sum_{d = \sigma_i}^{\sigma_{i+1}-1}\bar{V}_i[d] +  \mu_k[0],
\end{equation}
as
\begin{equation*}
\mu_k[0]=  \pi_k[0]
 +\mu_{k+1}g_{k+1}(1)\prod_{j=2}^{\tau_k} g_k(j)
\end{equation*}
and
\[
  \pi_k[0] = z_k[0] + \sum_{d=0}^{\tau_k-1} \Big(u_k[-(\tau_k-d)]+D_k[\sigma_{k}+d]\Big)\prod_{j=2}^{d+1}g_k(j).
\]
We also note that by  \eqref{eq:Sk_delta_shift}, as $\sigma_{k+1} = \sigma_k + \tau_k$,
\[ 
S_k[\sigma_{k+1}] = m_k[0] + \sum_{i=1}^{k}\sum_{d=\sigma_i}^{\sigma_{i+1}-1}\bar{V}_i[d].
\]
Applying \cref{lem:aux}-(ii) with 
\[
\begin{aligned}
  a &= S_{k}[\sigma_{k+1}]-V_{k}[\sigma_{k+1}-1]\\
  b &= \xi_{k+1}[0] - S_k[\sigma_{k+1}] = \mu_{k+1}[0] \\
  x &= V_{k}[\sigma_{k+1}-1],
\end{aligned}
\]
 gives that the lemma and \eqref{eq:cost_to_go} hold for $k$ and $\Delta = \tau_{k}$ as
 \[
   \xi_k[\tau_k-1] = m_k[0] + \sum_{i=1}^{k-1}\sum_{d=\sigma_i}^{\sigma_{i+1}-1}\bar V_i[d] + \sum_{d=\sigma_k}^{\mathclap{\sigma_k+\tau_k-2}}\bar{V}_k[d] + \mu_{k+1}[0]g_{k+1}(1).
 \]

Assume that \eqref{eq:cost_to_go} holds for some $k$ and $\Delta+1$, where $1\leq \Delta\leq \tau_i-1$ if $k<N$ and $1\leq \Delta\leq H+1$ if $k = N$. Then $V_k[\sigma_k+\Delta-1]$ can be found as the minimizer for
\begin{equation*} \minimize_{V_k[\sigma_k+\Delta-1]} \quad X_k(\Delta+1)\xi_k[\Delta]^2 + \gamma_k S_k[\sigma_k+\Delta]^2 
  + \rho_k V_k[\sigma_k+\Delta-1]^2.
\end{equation*}
Using that two of the terms in \eqref{eq:Sk_delta_shift} can be rewritten as
\begin{equation*}
  \sum^{\tau_k}_{\mathclap{d = \tau_k-(\Delta-1)}} u_k[-d] 
  + \sum_{d = \sigma_k}^{\mathclap{\sigma_k+\Delta-1}} \bar{V}_k[d] 
  =\sum_{d = 0}^{\Delta-1} \Big(u_k[-(\tau_k-d)]+D_k[\sigma_k+d]\Big) + \sum_{d = \sigma_k}^{\mathclap{\sigma_k+\Delta-1}} V_k[d]
\end{equation*}
 and with $x =V_k[\sigma_k+\Delta-1]$, $a = S_k[\sigma_k+\Delta]-V_k[\sigma_k+\Delta-1]$, which equals
\begin{multline*}
  a = m_{k-1}[0]+ z_k[0] + \sum_{i=1}^{k-1}\sum_{d=\sigma_i}^{\sigma_{i+1}-1}\bar{V}_i[d] \\
  + \sum_{d = 0}^{\Delta-1} \Big(u_k[-(\tau_k-d)]+D_k[+\sigma_k+d]\Big) 
  + \sum_{d = \sigma_k}^{\mathclap{\sigma_k+\Delta-2}} V_k[d],
\end{multline*}
and $b = \xi_k[\Delta]-S_k[\sigma_k+\Delta]$, which
gives
\begin{equation*}
  b = \ \ \mathclap{\sum_{d=\Delta}^{\tau_k-1}} \ \ \ \Big(u_k[-(\tau_k-d)]+D_k[\sigma_{k}+d]\Big)\ \ \ \ \mathclap{\prod_{j=\Delta+2}^{d+1}} \ \ \ g_k(j) 
   + \mu_{k+1}[0]g_{k+1}(1)\prod_{j=\Delta+2}^{\tau_k} g_k(j).
\end{equation*}
By applying \cref{lem:aux}-(i) it follows that \eqref{eq:cost_to_go} and the lemma holds for $k$ and $\Delta$ as well.
 Thus the lemma holds for all $1\leq \Delta \leq\tau_k$ for $1\leq k\leq N-1$ and $1 \leq \Delta \leq H+2$ for $k=N$.
\end{proof}

All that remains now is to find expressions for $V_k[\sigma_k]$ in terms of the initial conditions. The following lemma allows us to do so, using the expressions for $V_k$ derived in the previous lemma.
\begin{lemma}\label{lem:mVsum}
Let $h_k$, $P_k(i,j)$, $\phi_k(\Delta)$, $\pi_k[0]$, $\mu_k[0]$, $\Phi_k[0]$, and $\delta_k[0]$ be defined as in \cref{alg:init,alg:com_gen}. Then for $k\leq N-1$
\begin{equation}\label{eq:m_k_rewrite}
  m_k[0] + \sum_{i=1}^{k}\sum_{d = \sigma_i}^{\sigma_{i+1}-1}\bar{V}_i[d] = \delta_{k}[0]  - h_k\mu_{k+1}[0]
\end{equation}

\end{lemma}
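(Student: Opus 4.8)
The plan is to prove \eqref{eq:m_k_rewrite} by induction on $k$, running from $k=1$ up to $k=N-1$, using Lemma \ref{lem:V_expr} to eliminate the optimal shifted productions of node $k$ in favour of the single-index quantities $P_k(\cdot,\cdot)$ from Algorithm \ref{alg:init}. Write $W_k := m_k[0] + \sum_{i=1}^{k}\sum_{d = \sigma_i}^{\sigma_{i+1}-1}\bar{V}_i[d]$, so that the goal is $W_k = \delta_k[0]-h_k\mu_{k+1}[0]$. A useful preliminary observation is that $W_k = S_k[\sigma_{k+1}]$, which is \eqref{eq:Sk_delta_shift} at $\Delta=\tau_k$ (equivalently, Lemma \ref{lem:shifted_inv}-(i) together with \eqref{eq:Sk_step}); it makes the telescoping we will exploit transparent. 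For the base case $k=1$, with the natural conventions $\delta_0\equiv0$, $h_0=0$ and using $\sigma_1=0$, $\rho_1=r_1$, one has $\delta_1[0]=\Phi_1[0]$ and $\phi_1(\Delta)=1-P_1(\tau_1,\Delta)$, so the claim reduces to a direct verification from the explicit forms of $V_1[\Delta-1]$ and $\xi_1[\Delta-1]$ provided by Lemma \ref{lem:V_expr}.

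For the inductive step, assume $W_{k-1}=\delta_{k-1}[0]-h_{k-1}\mu_k[0]$. Splitting node $k$ off the two sums gives
\[
W_k = W_{k-1} + z_k[0] + \sum_{\Delta=0}^{\tau_k-1}\bigl(u_k[-(\tau_k-\Delta)]+D_k[\sigma_k+\Delta]\bigr) + \sum_{d=\sigma_k}^{\sigma_{k+1}-1}V_k[d],
\]
and Lemma \ref{lem:V_expr} lets us replace $\sum_{d=\sigma_k}^{\sigma_{k+1}-1}V_k[d]$ by $-T_{\tau_k}$, where $T_\Delta := \sum_{\Delta'=1}^{\Delta}\tfrac{X_k(\Delta')}{\rho_k}\xi_k[\Delta'-1]$. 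The heart of the argument is the evaluation of $T_{\tau_k}$. From \eqref{eq:xi}, each $\xi_k[\Delta-1]$ equals a head $H_\Delta$ — made up of $W_{k-1}+z_k[0]$, the node-$k$ terms $\bigl(u_k[-(\tau_k-d)]+D_k[\sigma_k+d]\bigr)\prod_{j=\Delta+1}^{d+1}g_k(j)$, and $\mu_{k+1}[0]\,g_{k+1}(1)\prod_{j=\Delta+1}^{\tau_k}g_k(j)$ — minus the partial sum $T_{\Delta-1}$ of the earlier productions of node $k$; hence the partial sums obey the one-line recursion $T_\Delta=(1-\tfrac{X_k(\Delta)}{\rho_k})T_{\Delta-1}+\tfrac{X_k(\Delta)}{\rho_k}H_\Delta$, $T_0=0$.

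I would then prove, by an inner induction on $\Delta$, that in $T_\Delta$ the coefficient of $W_{k-1}$ and of $z_k[0]$ is $P_k(\Delta,1)$, the coefficient of $u_k[-(\tau_k-d)]+D_k[\sigma_k+d]$ is $P_k(\Delta,d+1)\prod_{j=\Delta+1}^{d+1}g_k(j)$, and the coefficient of $\mu_{k+1}[0]$ is $g_{k+1}(1)\,P_k(\Delta,\tau_k)\prod_{j=\Delta+1}^{\tau_k}g_k(j)$ (each product empty, hence $1$, once $\Delta$ exceeds the relevant index). Verifying this is precisely a matter of matching the recursion for $T_\Delta$ against the two branches of the recursion for $P_k(l,m)$ in Algorithm \ref{alg:init}: the branch $l\le m$, which carries the extra factor $g_k(l)$, corresponds to a node-$k$ term still in transit (the case $\Delta\le d$, where the product is nontrivial), and the branch $l>m$ to a term that has already arrived (the case $\Delta>d$), with the empty-product conventions reconciling the two boundaries. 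Setting $\Delta=\tau_k$ collapses all the products to $1$ and yields
\begin{multline*}
W_k=(1-P_k(\tau_k,1))\bigl(W_{k-1}+z_k[0]\bigr)+\sum_{\Delta=0}^{\tau_k-1}\bigl(1-P_k(\tau_k,\Delta+1)\bigr)\bigl(u_k[-(\tau_k-\Delta)]+D_k[\sigma_k+\Delta]\bigr)\\-g_{k+1}(1)P_k(\tau_k,\tau_k)\,\mu_{k+1}[0].
\end{multline*}

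Finally I would substitute the induction hypothesis and $\mu_k[0]=\pi_k[0]+b_k\mu_{k+1}[0]$ and collect. The $\delta_{k-1}[0]$ term survives with the required coefficient $1-P_k(\tau_k,1)$; the coefficients of $\mu_{k+1}[0]$ add up to $-\bigl((1-P_k(\tau_k,1))b_kh_{k-1}+P_k(\tau_k,\tau_k)g_{k+1}(1)\bigr)=-h_k$ by the definition of $h_k$ in Algorithm \ref{alg:init}; and the remaining $z_k[0]$ and node-$k$ terms, using the explicit form $\pi_k[0]=z_k[0]+\sum_{\Delta}\bigl(u_k[-(\tau_k-\Delta)]+D_k[\sigma_k+\Delta]\bigr)\prod_{j=2}^{\Delta+1}g_k(j)$ together with the definition of $\phi_k(\cdot)$, reassemble exactly into $\Phi_k[0]$. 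Hence $W_k=\Phi_k[0]+(1-P_k(\tau_k,1))\delta_{k-1}[0]-h_k\mu_{k+1}[0]=\delta_k[0]-h_k\mu_{k+1}[0]$, which completes the induction. The main obstacle is the inner induction of the previous paragraph: one has to track carefully how each of the $\tau_k$ productions of node $k$ propagates — with the correct partial product of the $g_k(j)$ — into every later $\xi_k[\Delta-1]$, and verify that the accumulated weights telescope exactly into $P_k(\tau_k,\cdot)$. Once that invariant is guessed correctly, the outer assembly is routine algebra using the recursions already written down in Algorithms \ref{alg:init}--\ref{alg:com_gen}.
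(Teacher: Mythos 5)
Your proposal is correct and follows essentially the same route as the paper's proof: a double induction in which an inner induction on $\Delta$ shows that the accumulated partial sums of the $V_k$'s carry coefficients matching the two-branch recursion for $P_k(\Delta,\cdot)$ in Algorithm \ref{alg:init}, followed by an outer assembly using the definitions of $h_k$, $\phi_k$, $\Phi_k$, $\delta_k$ and $\mu_k$. The only difference is organizational — you keep $W_{k-1}$ as an opaque unit and substitute the outer induction hypothesis at the end, whereas the paper substitutes $\delta_{k-1}[0]-h_{k-1}\mu_k[0]$ up front and tracks the $h_{k-1}$-weighted terms throughout its intermediate identity \eqref{eq:m_k_partial}; your coefficients $P_k(\Delta,d+1)$ and $P_k(\Delta,\tau_k)$ coincide with the paper's $P_k(\Delta,\min(d+1,\Delta))$ and $P_k(\Delta,\Delta)$ since $P_k(l,m)$ is constant in $m$ for $m\geq l$.
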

\begin{proof}
Let $B_k[0] = z_k[0] + u_k[-\tau_k] + D_k[\sigma_k]$ and $B_k[i] = u_k[-(\tau_k-i)] + D_k[\sigma_{k}+i]$ for $1\leq i < \tau_k$.
We will prove the lemma by showing that for $1\leq \Delta \leq \tau_k$
\begin{multline}\label{eq:m_k_partial}
  m_{k-1}[0] + \sum_{i=1}^{k-1}\sum_{d = \sigma_i}^{\sigma_{i+1}-1}\bar{V}_i[d] + \sum_{d = \sigma_k}^{\sigma_k+\Delta-1}V_k[d] =\\
   (1-P_k(\Delta,1))\delta_{k-1}[0]  
   -\Big[h_{k-1}b_k(1-P(\Delta,1))+P_k(\Delta,\Delta)g_{k+1}(1)\ \ \ \mathclap{\prod_{j = \Delta+1}^{\tau_k}}\ \ \ g_k(j)\Big]\mu_{k+1}[0] \\
   - \sum_{d=0}^{\tau_k-1}B_k[d]\Big[
   P_k(\Delta,\min(d+1,\Delta))\prod_{j=\Delta+1}^{d+1}g_k(j)
   +(1-P_k(\Delta,1))h_{k-1}\prod_{j=2}^{d+1}g_k(j)
   \Big]
\end{multline}
More specifically we will show that
\begin{enumerate}
    \item \eqref{eq:m_k_partial} holds for $k=1$ and $\Delta = 1$.
    \item If \eqref{eq:m_k_partial} holds for some $k$ and $\Delta-1$ then it holds for $\Delta$ as well.
    \item If \eqref{eq:m_k_rewrite} holds for $k-1$ then \eqref{eq:m_k_partial} holds for $k$ and $\Delta = 1$.
    \item If \eqref{eq:m_k_partial} holds for $k$ and $\Delta = \tau_k$ then $\eqref{eq:m_k_rewrite}$ holds for $k$.
\end{enumerate}

For $k=1$ and $\Delta = 1$ the LHS of \eqref{eq:m_k_partial} is just $V_1[t]$. The RHS of \eqref{eq:m_k_partial} equals $-X_1(1)/\rho_1\cdot\mu_k[0]$ as $P_1(1,m) = X_1(1)/\rho_1$, $\delta_0=0$, and $h_0 = 0$.
And by \cref{lem:V_expr} the RHS is also equal to $V_1[t]$ since by \eqref{eq:delta1reduce} 
\[
  \xi_1[0] = \mu_k[0].
\]
 Thus \eqref{eq:m_k_partial} holds for $k=1$ and $\Delta = 1$.

 Applying \cref{lem:V_expr} on $V_k[\sigma_k+\Delta]$ for the LHS of \eqref{eq:m_k_partial} gives:
\begin{multline}\label{eq:ind_step}
    m_{k-1}[0] + \sum_{i=1}^{k-1}\sum_{d = \sigma_i}^{\sigma_{i+1}-1}\bar{V}_i[d] + \sum_{d = \sigma_k}^{\sigma_k+\Delta-1}V_k[d] = \\
    (1-\frac{X_k(\Delta)}{\rho_k})\Big( m_{k-1}[0] + \sum_{i=1}^{k-1}\sum_{d = \sigma_i}^{\sigma_{i+1}-1}\bar{V}_i[d] + \sum_{d = \sigma_k}^{\sigma_k+\Delta-2}V_k[d]  \Big)\\
     -\frac{X_k(\Delta)}{\rho_k}\Big[\sum_{d=0}^{\tau_k-1} B_k[d]\prod_{j=\Delta+1}^{d+1}g_k(j)  +  
     \mu_{k+1}[0]g_{k+1}(1)\prod_{j=\Delta+1}^{\tau_k} g_k(j)\Big]
\end{multline}
Now assume that \eqref{eq:m_k_partial} holds for $k$ and $\Delta-1$, we then show that \eqref{eq:m_k_partial} holds for $k$ and $\Delta$.
Using \eqref{eq:ind_step} gives for the coefficients for the different terms of the LHS for \eqref{eq:m_k_partial}  as follows.
For $\delta_{k-1}[0]$ we get
\[
  (1-\frac{X_k(\Delta)}{\rho_k})(1-P_k(\Delta-1,1)) = 1-P_k(\Delta,1).
\]
For the terms in front of $\mu_{k+1}[0]$ we get
\begin{multline*}
  -\Big(1-\frac{X_k(\Delta)}{\rho_k}\Big)\Big(h_{k-1}b_k
\big(1-P(\Delta-1,1) \big)
  +P_k(\Delta-1,\Delta-1)g_{k+1}(1)\prod_{j = \Delta}^{\tau_k} g_k[j]\Big) \\
   - \frac{X_k(\Delta)}{\rho_k}g_{k+1}(1)\prod_{j=\Delta+1}^{\tau_k} g_k(j) \\
  =-h_{k-1}b_k\big(1-P(\Delta,1)\big) - P_k(\Delta,\Delta)g_{k+1}(1)\prod_{j=\Delta+1}^{\tau_k} g_k(j),
\end{multline*}
and for the coefficient for $B[d]$,
\begin{multline*}
-(1-\frac{X_k(\Delta)}{\rho_k})\Big[P_k(\Delta-1,\min(d+1,\Delta-1))\prod_{j=\Delta}^{d+1}g_k(j) 
   + \\
   (1-P_k(\Delta-1,1))h_{k-1}\prod_{j=2}^{d+1}g_k(j)\Big]
- \frac{X_k(\Delta)}{\rho_k}\prod_{j=\Delta+1}^{d+1}g_k(j)\\
= - P_k(\Delta,\min(d+1,\Delta))\sum_{j=\Delta+1}^{d+1}g_k(j)
-(1-P_k(\Delta,1))h_{k-1}\prod_{j=2}^{d+1}g_k(j).
\end{multline*}
Thus \eqref{eq:m_k_partial} holds for $k$ and $\delta$ as well.

Assume that \eqref{eq:m_k_rewrite} holds for $k-1$. Then we can show that \eqref{eq:m_k_partial} holds for $k$ and $\Delta = 1$. 
Using that
\begin{equation}\label{eq:pi_k}
  \pi_k[0] = \sum_{d=0}^{\tau_k-1}\Big( B[d]\prod_{j=2}^{d+1}g_k(j)\Big),
\end{equation}
the RHS of \eqref{eq:m_k_partial} reduces to
\begin{equation*}
\big[1-P_k(1,1)\big]\delta_{k-1}[0] 
   -\big[h_{k-1}(1-P_k(1,1))+P_k(1,1)\big](\pi_k[0] + b_k\mu_{k+1}[0]) .
\end{equation*}
Using \eqref{eq:ind_step} with $\Delta = 1$, the definition for $P_k(1,1)$, and inserting  \eqref{eq:m_k_rewrite} gives that the LHS of \eqref{eq:m_k_partial} is equal to
\begin{equation*}
(1-P_k(1,1))\Big[\delta_{k-1}[0] - h_{k-1}\mu_{k}[0] \Big] 
    - P_k(1,1)\Big[\sum_{d=0}^{\tau_k-1} B[d]\prod_{\mathclap{j=\Delta+1}}^{d+1}g_k(j)  +  
     \mu_{k+1}[0]b_k\Big].
\end{equation*}
Using \eqref{eq:pi_k} and the definition for $\mu_k[0] = \pi_k[0] + b_k \mu_{k+1}[0]$ shows that the RHS and LHS are equal. And thus \eqref{eq:m_k_partial} hold for $k$ and $\Delta = 1$ if \eqref{eq:m_k_rewrite} holds for $k-1$.

Finally, we will show that if  \eqref{eq:m_k_partial} holds for $k$ and $\Delta = \tau_k$ then \eqref{eq:m_k_rewrite} holds for $k$. Using the definition for $h_k$  the RHS of \eqref{eq:m_k_partial} reduces to
\begin{multline*}
   (1-P_k(\tau_k,1))\delta_{k-1}[0]
   +h_k\mu_{k+1}[0] \\
   - \sum_{i=d}^{\tau_k}B_k[d]\Big[P_k(\tau_k,d+1) + \big(1-P_k(\tau_k,1)h_{k-1}\big)\prod_{j=2}^{d+1}g_k(j)\Big]
\end{multline*}
For $\Delta = \tau_k$ the LHS of \eqref{eq:m_k_rewrite} is equal to the LHS of \eqref{eq:m_k_partial} plus
\[
  z_k[0] +\sum_{d=1}^{\tau_k}u_k[-d] + \sum_{d=\sigma_k}^{\sigma_{k+1}-1} D_k[d] = \sum_{d=0}^{\tau_k-1}B_k[d].
\]
Thus it holds that the LHS of \eqref{eq:m_k_rewrite} is equal to
\begin{multline*}
   (1-P_k(\tau_k,1))\delta_{k-1}[0]
   +h_k\mu_{k+1}[0] \\
   + \sum_{i=d}^{\tau_k}B_k[d]\Big[(1-P_k(\tau_k,d+1)) - \big(1-P_k(\tau_k,1)h_{k-1}\big)\prod_{j=2}^{d+1}g_k(j)\Big].
\end{multline*}
Using the definition for $\phi_i(\Delta)$ in \cref{alg:init} and $\Phi_i$ and $\delta_k$ in \cref{alg:com_gen} shows that \eqref{eq:m_k_partial} gives \eqref{eq:m_k_rewrite} for $\Delta=\tau_k$, as
\[
  \sum_{i=d}^{\tau_k}B_k[d]\Big[(1-P_k(\tau_k,d+1)) - \big(1-P_k(\tau_k,1)h_{k-1}\big)\prod_{j=2}^{d+1}g_k(j)\Big] = \Phi_k[0].
\]

\end{proof}

We are now finally ready to prove the theorem, which follows from the previous lemmas.

\emph{Proof of \cref{thm:gen}:}
\Cref{lem:V_expr} with $\Delta = 1$ and \cref{lem:mVsum}  gives that
\begin{equation}\label{eq:opt_V}\begin{aligned}
  V_k[\sigma_k] &= -\frac{X_k(1)}{\rho_k}\Big[m_{k-1}[0]  +  \sum_{i=1}^{k-1}\sum_{d = \sigma_i}^{\sigma_{i+1}-1}\bar{V}_i[d]
   + \mu_k[0] \Big]\\
  & = -\frac{X_k(1)}{\rho_k}\Big[\delta_{k-1}[0]  + (1-h_k)\mu_k[0] \Big]
\end{aligned}\end{equation}
 from which the optimal $v_k[0] = \rho_k/r_k\cdot V_k[\sigma_k]$ as in \cref{alg:com_gen} follows.
Using \cref{lem:shifted_inv}-(iii), \cref{lem:mVsum}, and that $v_k[0] = \rho_k/r_k\cdot V_k[\sigma_k]$ gives that
\begin{multline*}
  u_{k-1}[0] = (1-\frac{\gamma_k}{q_k})(z_k[t]+u_k[-\tau_k]+D_k[0]) +d_k[0]-D_k[0]\\
  +V_k[\sigma_k]\Big(\frac{\rho_k}{r_k}-\frac{\gamma_k}{q_k} \Big) - \frac{\gamma_k}{q_k}\Big(\delta_{k-1}[0]-h_{k-1}\mu_k[0] \Big).
\end{multline*}
Inserting \eqref{eq:opt_V} gives that the optimal $u$ is as in \cref{alg:com_gen}. 

The results will hold for $t\neq 0$ as the problem has an infinite horizon and one can always change the variables so that current time is time zero.
\qed

\bibliographystyle{IEEEtran}
\bibliography{prod_everywhere}

% Generated by IEEEtran.bst, version: 1.12 (2007/01/11)
\begin{thebibliography}{1}
\providecommand{\url}[1]{#1}
\csname url@samestyle\endcsname
\providecommand{\newblock}{\relax}
\providecommand{\bibinfo}[2]{#2}
\providecommand{\BIBentrySTDinterwordspacing}{\spaceskip=0pt\relax}
\providecommand{\BIBentryALTinterwordstretchfactor}{4}
\providecommand{\BIBentryALTinterwordspacing}{\spaceskip=\fontdimen2\font plus
\BIBentryALTinterwordstretchfactor\fontdimen3\font minus
  \fontdimen4\font\relax}
\providecommand{\BIBforeignlanguage}[2]{{%
\expandafter\ifx\csname l@#1\endcsname\relax
\typeout{** WARNING: IEEEtran.bst: No hyphenation pattern has been}%
\typeout{** loaded for the language `#1'. Using the pattern for}%
\typeout{** the default language instead.}%
\else
\language=\csname l@#1\endcsname
\fi
#2}}
\providecommand{\BIBdecl}{\relax}
\BIBdecl

\bibitem{RL06}
M.~Rotkowitz and S.~Lall, ``A characterization of convex problems in
  decentralized control,'' \emph{IEEE Transactions on Automatic Control},
  vol.~51, no.~2, pp. 274--286, 2006.

\bibitem{LL11}
L.~{Lessard} and S.~{Lall}, ``Quadratic invariance is necessary and sufficient
  for convexity,'' in \emph{Proceedings of the 2011 American Control
  Conference}, 2011, pp. 5360--5362.

\bibitem{WMD18}
Y.~{Wang}, N.~{Matni}, and J.~C. {Doyle}, ``Separable and localized
  system-level synthesis for large-scale systems,'' \emph{IEEE Transactions on
  Automatic Control}, vol.~63, no.~12, pp. 4234--4249, 2018.

\bibitem{LFJ13}
F.~{Lin}, M.~{Fardad}, and M.~R. {Jovanović}, ``Design of optimal sparse
  feedback gains via the alternating direction method of multipliers,''
  \emph{IEEE Transactions on Automatic Control}, vol.~58, no.~9, pp.
  2426--2431, 2013.

\bibitem{cantoni2007control}
M.~Cantoni, E.~Weyer, Y.~Li, S.~K. Ooi, I.~Mareels, and M.~Ryan, ``Control of
  large-scale irrigation networks,'' \emph{Proceedings of the IEEE}, vol.~95,
  no.~1, pp. 75--91, 2007.

\bibitem{inventory}
K.~Subramanian, J.~B. Rawlings, C.~T. Maravelias, J.~Flores-Cerrillo, and
  L.~Megan, ``Integration of control theory and scheduling methods for supply
  chain management,'' \emph{Computers \& Chemical Engineering}, vol.~51, pp.
  4--20, 2013.

\bibitem{ahuja1995applications}
R.~K. Ahuja, T.~L. Magnanti, J.~B. Orlin, and M.~Reddy, ``Applications of
  network optimization,'' \emph{Handbooks in Operations Research and Management
  Science}, vol.~7, pp. 1--83, 1995.

\bibitem{heyden2018structured}
M.~Heyden, R.~Pates, and A.~Rantzer, ``A structured linear quadratic controller
  for transportation problems,'' in \emph{2018 European Control Conference
  (ECC)}.\hskip 1em plus 0.5em minus 0.4em\relax IEEE, 2018, pp. 1654--1659.

\end{thebibliography}

\end{document}